\newtheorem{thm}{Theorem}[section]
\newtheorem{cor}[thm]{Corollary}
\newtheorem{lem}[thm]{Lemma}
\newtheorem{prop}[thm]{Proposition}
\newtheorem{rem}[thm]{Remark}
\theoremstyle{remark}
\theoremstyle{definition}
\numberwithin{equation}{section}
\newcommand{\thmref}[1]{Theorem~\ref{#1}}
\newcommand{\lemref}[1]{Lemma \ref{#1}}
\newcommand{\remref}[1]{Remark \ref{#1}}
\def\CC{{\mathbb C}}
\def\NN{{\mathbb N}}
\def\RR{{\mathbb R}}
\def\SS{{{\mathbb S}^{d-1}}}
\def\ZZ{{\mathbb Z}}
\def\cF{\mathcal{F}}
\def\fD{{\mathfrak D}}
\def\sF{\mathscr{F}}
\def\sG{\mathscr{G}}
\def\HHH{\mathscr{H}}
\def\eps{{\varepsilon}}
\def\u*{*}
\def\SSS{{\mathbb S}}
\def\FF{\sF}
\def\GG{G}
\def\GGG{g}
\begin{document}

\title[Localized kernels in terms of Newtonian kernels]
{Highly Localized Kernels on the Sphere \\
Induced by Newtonian kernels}

\author{Kamen Ivanov}
\address{Institute of Mathematics and Informatics\\
Bulgarian Academy of Sciences\\ 1113 Sofia\\ Bulgaria}
\email{kamen@math.bas.bg}

\author{Pencho Petrushev}
\address{Department of Mathematics\\University of South Carolina\\
Columbia, SC 29208}
\email{pencho@math.sc.edu}

\subjclass[2010]{31B05, 31B25}
\keywords{Harmonic functions, Newtonian kernel, Localized summability kernels on the sphere}
\thanks{The first author has been supported by Grant DN 02/14
of the Fund for Scientific Research of the Bulgarian Ministry of Education and Science.
The second author has been supported by NSF Grant DMS-1714369.}

\begin{abstract}
The purpose of this article is to construct highly localized summability kernels on the unit sphere in $\RR^d$
that are restrictions to the sphere of linear combinations
of a small number of shifts of the fundamental solution of the Laplace equation
(Newtonian kernel) with poles outside the unit ball in $\RR^d$.
The same problem is also solved for the subspace $\RR^{d-1}$ in $\RR^d$.

\end{abstract}

\date{August 26, 2018}

\maketitle

\section{Introduction}\label{s1}

The shifts of the fundamental solution of the Laplace equation
$\frac{1}{|x|^{d-2}}$ in dimensions $d>2$ or
$\ln \frac{1}{|x|}$ if $d=2$
with $|x|$ being the Euclidean norm of $x\in \RR^d$
are basic building blocks in Potential theory.
As is customary, we shall term the harmonic function
$\frac{1}{|x|^{d-2}}$ or $\ln \frac{1}{|x|}$
``Newtonian kernel''.

We are interested in the problem for approximation
of harmonic functions on the unit ball $B^d$ in $\RR^d$ from finite linear combinations
of shifts of the Newtonian kernel.
More explicitly, the problem is
for a given harmonic function $U$ on $B^d$ and $n\ge 1$ to find
$n$ locations $\{y_j\}$ in $\RR^d\setminus \overline{B^d}$
and coefficients $\{c_j\}$ in $\CC$ such that
\begin{equation}\label{lin-comb}
c_0+\sum_{j=1}^n \frac{c_j}{|x-y_j|^{d-2}}\quad\hbox{if} \;\; d>2
\quad\hbox{or}\quad
c_0+\sum_{j=1}^n c_j\ln \frac{1}{|x-y_j|}\quad\hbox{if} \;\; d=2
\end{equation}
approximates $U$ well (near best) in the harmonic Hardy space $\HHH^p(B^d)$, $0<p\le \infty$.

This problem is also important in the case when $U$ is harmonic on
$\RR^d\setminus \overline{B^d}$ and the poles $\{y_j\}$ are in $B^d$ or
$U$ is harmonic on
$\RR^d_+$ and the poles $\{y_j\}$ are in $\RR^d_-$.

An alternative formulation of the problem is to approximate a given potential $U$
by the potential of $n$ point masses (using terminology from Geodesy)
or by the potential of $n$ point charges (in terms of Electrostatics)
or by the potential of $n$ magnetic poles (in Magnetism).

It should be pointed out that there is a great deal of work done on the Method of Fundamental Solutions
for the Dirichlet problem of the Laplace equation in Numerical Analysis.
This theme is directly related to the problems we consider here.
We refer the reader to \cite{AG, Helms, K} for the basics of Potential theory.

The poor localization of the Newtonian kernel makes the above approximation problem
unamenable and challenging.
An important step in solving this problem (see \cite{IP1}) is to construct highly localized summability kernels on the unit sphere
$\SS$ in $\RR^d$ that are restrictions to the sphere of linear combinations of finitely many (fixed number)
shifts of the Newtonian kernel.
This is the main goal of this article.

The simple fact that
\begin{equation}\label{norm-rep}
|x-a\eta|^2=a^2+1-2a(x\cdot\eta),\quad x,\eta\in\SS,
\end{equation}
implies that the restriction of any shift of the Newtonian kernel to $\SS$ is a zonal function,
i.e. it is the composition $F(x\cdot\eta)$ of an appropriate univariate function $F:[-1,1]\to\RR$
and the dot product $x\cdot\eta$, $x,\eta\in\SS$.
This leads us to the following explicit formulation of the problem at hand:

\smallskip


\noindent
{\bf Problem 1.}
{\em
Let $M>d-1$. For given $\eps\in(0,1]$  find $2m+1$ constants
$b_\nu\in\RR$, $a_\nu>1$ so that the restriction $F_\eps(x\cdot\eta)$ of the function
\begin{equation}\label{cond-3}
f_{\eps,\eta}(x) = b_0 + \sum_{\nu=1}^{m} \frac{b_\nu}{|x-a_\nu\eta|^{d-2}},
\quad \eta\in\SS,\; x\in \RR^d\setminus\cup_{\nu=1}^m\{a_\nu\eta\},
\quad\hbox{if $d>2$},
\end{equation}
or
\begin{equation}\label{cond-4}
f_{\eps,\eta}(x) = b_0 + \sum_{\nu=1}^{m} b_\nu\ln \frac{1}{|x-a_\nu\eta|},
\quad \eta\in{\mathbb S}^1,\; x\in \RR^2\setminus\cup_{\nu=1}^m\{a_\nu\eta\},
\quad\hbox{if $d=2$,}
\end{equation}
to the unit sphere $\SS \subset \RR^d$ satisfies the following conditions:
\begin{equation}\label{cond-1}
|F_\eps(x\cdot\eta)| \le \frac{c\eps^{-d+1}}{(1+\eps^{-1}\rho(x, \eta))^M},
\quad\forall x,\eta\in \SS;
\end{equation}
\begin{equation}\label{cond-2}
\int_{\SS}F_\eps(x\cdot\eta)d\sigma(x)=1, \quad\forall \eta\in\SS
\end{equation}
with constants $m\in\NN$ and $c>0$ depend only on $M$ and $d$. }

Here
$\rho(x, \eta):=\arccos \,(x\cdot \eta)$ is the geodesic distance between $x,\eta\in\SS$ and
$\sigma$ denotes the Lebesgue measure on $\SS$.
It should be pointed out that the localization required in \eqref{cond-1}--\eqref{cond-2} is only
\emph{on the boundary} $\SS$ of the unit ball.
As far as every such $f_{\eps,\eta}$ is a \emph{harmonic} function on $B^d$
it cannot be well localized in the interior of the ball.

We shall present two solutions (even three in dimension $d=2$) of Problem~1.
To solve this problem it suffices to solve either of the following two problems:

\smallskip


\noindent
{\bf Problem 2.}
{\em
Let $M>d-1$. For given $\eps\in(0,1]$ find constants $a_j>1$ and $b_j, c_j\in\RR$
so that the restriction $F_\eps(x\cdot\eta)$ of the function
\begin{equation}\label{cond-231}
f_{\eps,\eta}(x) = \sum_{j=1}^{m} \frac{b_j}{|x-a_j\eta|^{d-2}}
+ \sum_{j=1}^{m} c_j(\eta\cdot \nabla)\Big(\frac{1}{|x-a_j\eta|^{d-2}}\Big),
\end{equation}
$\eta\in\SS$, $x\in \RR^d\setminus \{a_1\eta, \dots, a_m\eta\}$,
if $d>2$
or
\begin{equation}\label{cond-241}
f_{\eps,\eta}(x) = b_0 + \sum_{j=1}^{m} c_j(\eta\cdot \nabla)\ln\frac{1}{|x-a_j\eta|},
\;\; \eta\in{\mathbb S}^1,\; x\in \RR^d\setminus \{a_1\eta, \dots, a_m\eta\},
\;\;\hbox{if $d=2$,}
\end{equation}
to $\SS$ satisfies conditions \eqref{cond-1}--\eqref{cond-2},
where as above the constants $m\in\NN$ and $c>0$ depend only on $M$ and $d$.
}

\smallskip


\noindent
{\bf Problem 3.}
{\em
Let $M>d-1$. For given $\eps\in(0,1]$ find $m+1$ constants $b_\ell\in\RR$ and $a>1$
so that the restriction $F_\eps(x\cdot\eta)$ of the function
\begin{equation}\label{cond-23}
f_{\eps,\eta}(x) = \sum_{\ell=0}^{m} b_\ell(\eta\cdot \nabla)^\ell\Big(\frac{1}{|x-a\eta|^{d-2}}\Big),
\quad \eta\in\SS, \; x\in \RR^d\setminus \{a\eta\},
\quad\hbox{if $d>2$};
\end{equation}
or
\begin{equation}\label{cond-24}
f_{\eps,\eta}(x) = b_0 + \sum_{\ell=1}^{m} b_\ell(\eta\cdot \nabla)^\ell\ln\frac{1}{|x-a\eta|},
\quad \eta\in{\mathbb S}^1,\; x\in \RR^2\setminus \{a\eta\},
\quad\hbox{if $d=2$,}
\end{equation}
to $\SS$ satisfies conditions \eqref{cond-1}--\eqref{cond-2},
where as above the constants $m\in\NN$ and $c>0$ depend only on $M$ and $d$.
}

\smallskip

As is well known the $\ell$th directional derivative operator $(\eta\cdot\nabla)^\ell$,
where $\nabla$ stands for the gradient operator,
is approximated well by the finite difference operator
$\fD^\ell_t(\eta):=t^{-\ell}\sum_{k=0}^\ell (-1)^{\ell-k} \binom{\ell}{k} T(\eta,kt)$,
where $T(\eta,t)f(x):=f(x+t\eta)$, $x\in\RR^d$.
More precisely, if $d>2$, $\ell\ge 1$, $a>1$, and $\eta\in\SS$, then
$$
\big\|(\eta\cdot\nabla)^\ell|x-a\eta|^{2-d}
- \fD^\ell_t(\eta)|x-a\eta|^{2-d}\big\|_{L^\infty(\overline{B^d})} \to 0
\quad\hbox{as}\quad t \to 0,
$$
and a similar claim is valid when $d=2$.
Having in mind that $\fD^\ell_t(\eta)|x-a\eta|^{2-d}$ is
a linear combination of Newtonian kernels with poles at $(a-kt)\eta$, $k=0,\dots,\ell$,
we see that, a solution of Problem~2 or Problem~3 leads immediately to a solution of Problem~1.

It is easy to see that a properly dilated and normalized version of the Poisson kernel
provides a solution of Problem~2 and Problem~3 in the case $M=d$.
Indeed, the Poisson kernel for a ball of radius $a>1$ in $\RR^d$ takes the form
\begin{equation}\label{Poisson-rep}
P(y,x)=\frac{1}{a\omega_d}\frac{a^2-|x|^2}{|x-y|^d}, \quad |y|=a, \quad |x|<a,
\end{equation}
where
$\omega_d:=2\pi^{d/2}/\Gamma(d/2)$ is the Lebesgue measure of $\SS$.
Restricting $P(y,x)$ to $\SS$ as a function of $x$ and setting $y=a\eta$ with $\eta\in\SS$ and $a:=1+\eps$
we get
$P(a\eta,x)=\frac{1}{a\omega_d}\frac{a^2-1}{|x-a\eta|^d}$.
A straightforward derivation shows that
\begin{equation}\label{Poisson1}
(\eta\cdot \nabla)|x-a\eta|^{2-d} = (d-2)(2a)^{-1}|x-a\eta|^{2-d}+2^{-1}(d-2)\omega_d P(a\eta,x),
\;\;\hbox{if}\; d>2.
\end{equation}
Hence, the kernel $F_\eps(x\cdot\eta):=P(a\eta,x)$ is of the forms (\ref{cond-231}) and (\ref{cond-23}) with $m=1$.
It is also easy to see that in dimension $d=2$
\begin{equation}\label{Poisson2}
(\eta\cdot \nabla)\ln\frac{1}{|x-a\eta|} = \frac{1}{2a}+\pi P(a\eta,x).
\end{equation}
and hence the kernel $F_\eps(x\cdot\eta):=P(a\eta,x)$ is of the forms (\ref{cond-241}) and (\ref{cond-24}) with $m=1$.

Furthermore, it is easy to show that (see (\ref{simple-ineq}))
\begin{equation}\label{dist-x-aeta}
5^{-1}(\eps+\rho(x, \eta)) \le |x-a\eta| \le 2(\eps+\rho(x, \eta)),\;\;x\in\SS,
\quad\hbox{if $\;0<\eps\le 1$.}
\end{equation}
Therefore,
$0< F_\eps(x\cdot\eta)\le c\eps^{-d+1}(1+\eps^{-1}\rho(x, \eta))^{-d}$
and hence $F_\eps(x\cdot\eta):=P(a\eta,x)$ solves Problem~2 and Problem~3 for $M=d$.

To solve Problem 2 or Problem 3 for an arbitrary $M>d$ is not so easy.
When trying to solve Problem~3 in the general case the first question that occurs is whether
the $m$th directional derivative
$(\eta\cdot\nabla)^m|x-a\eta|^{2-d}$ if $d>2$
or
$(\eta\cdot\nabla)^m\ln 1/|x-a\eta|$ if $d=2$
for sufficiently large $m$, depending on $M$, can solve the problem.
The well known Maxwell formula (see e.g. \cite[p. 479, ex. 13]{AAR}) asserts that
if $d\ge 1$, $\eta\in\SS$, $\lambda>0$, $m\in\NN$, then
\begin{equation}\label{Maxwell-1}
\left(\eta\cdot\nabla\right)^m \frac{1}{|x|^\lambda}= (-1)^m m!
C_m^{(\lambda/2)}\left(\frac{x\cdot\eta}{|x|}\right) \frac{1}{|x|^{\lambda+m}},
\quad x\in\RR^d\backslash\{0\},
\end{equation}
where $C_m^{(\mu)}$ is the $m$th degree ultraspherical polynomial normalized by the identity
$C_m^{(\mu)}(1)=\binom{m+2\mu-1}{m}$.
Now, using that
$\lim_{\mu\to 0+}\mu^{-1}(|t|^{-\mu}-1)=\ln \frac{1}{|t|}$~and
$\lim_{\mu\to 0+}\mu^{-1}C_m^{(\mu)}(t)=2m^{-1}T_m(t)$
one obtains by letting $\mu\to 0$ in (\ref{Maxwell-1})
\begin{equation}\label{Maxwell-2}
\left(\eta\cdot\nabla\right)^m \ln \frac{1}{|x|}=(-1)^{m}(m-1)! T_m\left(\frac{x\cdot\eta}{|x|}\right) \frac{1}{|x|^m},
\quad x\in\RR^2\backslash\{0\},
\end{equation}
where $T_m$ is the $m$th degree Chebyshev polynomial of the first kind normalized by $T_m(1)=1$.
Let $\eta\in\SS$, $a:=1+\eps$, $\eps>0$, and $m\in\NN$. Then (\ref{Maxwell-1})-(\ref{Maxwell-2}) yield
\begin{align}
\left(\eta\cdot\nabla\right)^m \frac{1}{|x-a\eta|^{d-2}}&= (-1)^m m!
	C_{m}^{(d/2-1)}\left(\frac{(x-a\eta)\cdot \eta}{|x-a\eta|}\right)\frac{1}{|x-a\eta|^{d-2+m}},\;\; d>2,\label{Maxwell-3}\\
\left(\eta\cdot\nabla\right)^m \ln \frac{1}{|x-a\eta|}&= (-1)^{m}(m-1)!
	T_{m}\left(\frac{(x-a\eta)\cdot \eta}{|x-a\eta|}\right)\frac{1}{|x-a\eta|^m}, \;\;d=2.\label{Maxwell-4}
\end{align}
Now, using (\ref{Maxwell-3}) and (\ref{dist-x-aeta}) we obtain the sharp estimate
\begin{equation}\label{Local-0}
\left|\left(\eta\cdot\nabla\right)^m \frac{1}{|x-a\eta|^{d-2}}\right|
\le c(m, d)\frac{\eps^{-m+1}\eps^{-d+1}}{(1+\eps^{-1}\rho(x, \eta))^{m+d-2}},
\quad x\in\SS.
\end{equation}
On the other hand,
since $\left(\eta\cdot\nabla\right)^m |x-a\eta|^{2-d}$ is a harmonic function we have
\begin{align*}
\int_\SS \left(\eta\cdot\nabla\right)^m \frac{1}{|x-a\eta|^{d-2}} d\sigma(x)
&= \omega_d\left(\eta\cdot\nabla\right)^m \frac{1}{|x-a\eta|^{d-2}}\Big|_{x=0}
\\
= \omega_d m!C_{m}^{(d/2-1)}(1)a^{-d+2-m}
&= \omega_d m!\binom{m+d-3}{m}a^{-d+2-m}
=\frac{c(m, d)}{(1+\eps)^{m+d-2}}.
\end{align*}
Therefore, if we set
$$
F_\eps(x\cdot\eta):=c^*\left(\eta\cdot\nabla\right)^m |x-a\eta|^{2-d}
$$
with a normalization constant $c^*$ so that $F_\eps(x\cdot\eta)$ obeys (\ref{cond-2})
then in light of the additional multiplier $\eps^{-m+1}$ in \eqref{Local-0}
$|F_\eps(x\cdot\eta)|$ with $m\ge 2$ cannot have the decay from \eqref{cond-1} for any $M>d-1$.
The same argument applies if $d=2$.
The conclusion is that Problem~2 cannot be solved by using a single $m$th directional derivative
of the Newtonian kernel.

In this article we present two main results.
First, modifying Lemma 2.5 in L.~Colzani \cite{Colzani} we show that the function
$$
F_\eps(x\cdot\eta) = \sum_{j=1}^m (-1)^{j+1} \binom{m}{j} (1+j\eps)^{d-1}P((1+j\eps)\eta, x),
$$
where $P$ is the Poisson kernel \eqref{Poisson-rep} and $m \ge M-d$,
solves Problem~2.
Secondly, we show that Problem~3 is solved by the simpler kernel
$$
F_\eps(x\cdot\eta) := \frac{c^\star\eps^{2m-1}}{|x-a\eta|^{2m+d-2}}, \;\;x\in\SS,
\quad\hbox{with}\quad
\int_\SS F_\eps(x\cdot\eta)d\sigma(x)=1,
$$
where
$m\ge (M-d+2)/2$,
$a=1+\eps$, and $c^\star>0$ is a normalization constant.
While the proof of the first result is straightforward, the proof of the second (more surprising) result is quite involved
and this is the main novelty in this paper.

Our solution of Problem~3 (and hence of Problem~1) has an obvious advantage over Colzani's solution of Problem~2 -
it is amenable to generalizations.
Our scheme can be used for the solution of the analog of Problem~3
and consequently Problem~1 for domains with much more complicated geometry
than the ball, while Colzani's solution of Problem~2 relying on the Poisson kernel
is limited to domains for which the Poisson kernel is
available in a convenient concrete form.

The rest of this article is organized as follows.
In Section~\ref{sec:colzani} we presents a solution of Problem~2 based on an idea of L. Colzani from \cite{Colzani}.
In Section~\ref{s4_2} we present the solution of Problem~3 mentioned above.
In Section~\ref{sec:S2} we present a second solution of Problem~3 in dimension $d=2$.
Section~\ref{sec:S4} treats in brief the localization on $\SS$ of harmonic functions on $\RR^d\setminus \overline{B^d}$.
As a~natural progression of our development, in Section \ref{sec:Rd} we also solve the analogues of Problems 2 and 3 and as consequence
the analogue of Problem~1
with $\SS$ replaced by $\RR^{d-1}$.

\section{Localized kernels on $\SS$ in terms of Newtonian kernels: Solution of Problem 2}\label{sec:colzani}

In this section we present a solution of Problem~2 from \S\ref{s1} based on the idea from \cite[Lemma 2.1]{Colzani}.


\begin{thm}\label{thm:colzani}
Let $m\in\NN$, $d\ge 2$, $\eta\in\SS$, and $0<\eps\le 1$.
Consider the function
\begin{equation}\label{def-f-eps}
f_{\eps, \eta}(x) := \sum_{j=1}^m (-1)^{j+1} \binom{m}{j} (1+j\eps)^{d-1}P((1+j\eps)\eta, x),
\quad x\in \RR^d\setminus\cup_{j=1}^m\{(1+j\eps)\eta\},
\end{equation}
where $P$ is the Poisson kernel \eqref{Poisson-rep}.
Then the restriction $F_{\eps}(x\cdot \eta)$ of the function $f_{\eps, \eta}(x)$ on $\SS$ has these properties:
\begin{equation}\label{colzani-1}
|F_{\eps}(x\cdot\eta)| \le \frac{c\eps^{-d+1}}{(1+\eps^{-1}\rho(x, \eta))^{m+d-1}},
\quad\forall x, \eta\in \SS,
\end{equation}
and
\begin{equation}\label{colzani-2}
\int_{\SS} F_{\eps}(x\cdot\eta)d\sigma(x) =1,
\quad\forall \eta\in \SS,
\end{equation}
where $c>0$ is a constant depending only on $m$ and $d$.
Furthermore, $f_{\eps, \eta}(x)$ can be represented in the form
\begin{equation}\label{col-3}
f_{\eps,\eta}(x) = \sum_{j=1}^{m} \frac{b_j}{|x-a_j\eta|^{d-2}}
+ \sum_{j=1}^{m} c_j(\eta\cdot \nabla)\Big(\frac{1}{|x-a_j\eta|^{d-2}}\Big),
\quad\hbox{if}\quad d>2,
\end{equation}
or
\begin{equation}\label{col-4}
f_{\eps,\eta}(x) = b_0 + \sum_{j=1}^{m} c_j(\eta\cdot \nabla)\ln\frac{1}{|x-a_j\eta|},
\quad\hbox{if} \quad d=2,
\end{equation}
where $a_j:=1+j\eps$.

\end{thm}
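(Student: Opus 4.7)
My plan has three parts: verify the normalization \eqref{colzani-2}, establish the decay \eqref{colzani-1}, and recast $f_{\eps,\eta}$ in the forms \eqref{col-3} and \eqref{col-4}. For \eqref{colzani-2}: for any fixed $a>1$, the function $x\mapsto P(a\eta,x)$ is harmonic on the open unit ball (since its only singularity lies at $a\eta\notin\overline{B^d}$), so the mean value property at $x=0$ gives
\[
\frac{1}{\omega_d}\int_{\SS}P(a\eta,x)\,d\sigma(x)=P(a\eta,0)=\frac{1}{a^{d-1}\omega_d},
\]
i.e.\ $\int_\SS P(a\eta,x)\,d\sigma(x)=a^{-(d-1)}$. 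Applied term by term in \eqref{def-f-eps}, the $(1+j\eps)^{d-1}$ factors cancel and $\int_\SS F_\eps=\sum_{j=1}^m(-1)^{j+1}\binom{m}{j}=1$.

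For \eqref{colzani-1}, the main step is to encode the alternating sum as an $m$-th finite difference. Put $g(a):=a^{d-1}P(a\eta,x)=\dfrac{a^{d-2}(a^2-1)}{\omega_d|x-a\eta|^d}$ for $x\in\SS$. Then $F_\eps(x\cdot\eta)=\sum_{j=1}^m(-1)^{j+1}\binom{m}{j}g(1+j\eps)$, and because $g(1)=0$ this equals $(-1)^{m+1}\Delta_\eps^m g(1)$, where $\Delta_\eps^m$ denotes the $m$-th forward difference with step $\eps$. The iterated integral representation of $\Delta_\eps^m$ then gives $|F_\eps(x\cdot\eta)|\le\eps^m\sup_{a\in[1,1+m\eps]}|g^{(m)}(a)|$. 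Write $g=N\cdot D^{-d/2}$ with $N(a)=a^{d-2}(a^2-1)/\omega_d$ and $D(a)=|x-a\eta|^2=a^2-2(x\cdot\eta)a+1$. Since $D'(a)=2(a-x\cdot\eta)$, $D''(a)=2$, and $D^{(k)}\equiv 0$ for $k\ge 3$, an easy induction expresses $(D^{-d/2})^{(n)}$ as a sum of terms $c(a-x\cdot\eta)^{j}D^{-d/2-k}$ with $2k-j=n$; using $|a-x\cdot\eta|^{j}\le D^{j/2}$, these yield the scale-invariant bound $|(D^{-d/2})^{(n)}(a)|\le C_n|x-a\eta|^{-(d+n)}$. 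The decisive feature is that $N(1)=0$, so $|N(a)|\le C|a-1|\le Cm\eps$ on $[1,1+m\eps]$, while $|N^{(k)}(a)|\le C_k$ for every $k\ge 1$.

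I then split into two regimes, using $|x-a\eta|^{2}=(a-1)^{2}+2a(1-\cos\rho)$ and $1-\cos\rho\asymp\rho^{2}$. When $\rho\ge\eps$, we get $|x-a\eta|\ge c\rho$ for $a\in[1,1+m\eps]$, so Leibniz together with the bounds above give
\[
|g^{(m)}(a)|\le C\eps\,\rho^{-(m+d)}+C\sum_{k=1}^{m}\rho^{-(m+d-k)}\le C\rho^{-(m+d-1)},
\]
where the $k=0$ term uses $\eps/\rho\le 1$ to drop one power of $\rho^{-1}$, and each $k\ge 1$ term is dominated via $\rho\le\pi$; consequently $|F_\eps|\le C\eps^m\rho^{-(m+d-1)}$, matching \eqref{colzani-1} since $1+\eps^{-1}\rho\asymp\eps^{-1}\rho$ here. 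When $\rho\le\eps$ the finite-difference machinery is unnecessary: each summand of \eqref{def-f-eps} is bounded by $C\eps/|x-(1+j\eps)\eta|^{d}\le C\eps^{-d+1}$, using that the numerator is $O(\eps)$ and that $|x-(1+j\eps)\eta|\ge c(j\eps+\rho)\ge c\eps$ (cf.\ \eqref{dist-x-aeta}); this also matches \eqref{colzani-1} since $1+\eps^{-1}\rho\asymp 1$ in this regime.

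Finally, \eqref{col-3} and \eqref{col-4} are obtained by substituting \eqref{Poisson1} (if $d>2$) or \eqref{Poisson2} (if $d=2$) into \eqref{def-f-eps} and collecting like terms: each $(1+j\eps)^{d-1}P((1+j\eps)\eta,x)$ becomes a linear combination of $|x-a_j\eta|^{2-d}$ and $(\eta\cdot\nabla)|x-a_j\eta|^{2-d}$ (respectively, of a constant and $(\eta\cdot\nabla)\ln(1/|x-a_j\eta|)$). The principal obstacle is obtaining the decay exponent $m+d-1$ rather than the naive $m+d$ that the bound on $(D^{-d/2})^{(m)}$ alone would produce: this is precisely where the vanishing $N(1)=0$ (equivalently, the factor $a^{2}-1=(a-1)(a+1)$ in the Poisson kernel) must be exploited to trade one power of $\rho^{-1}$ for $\eps/\rho\le 1$.
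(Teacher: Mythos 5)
Your proof is correct and follows essentially the same route as the paper: normalization via the mean value property, the representation via \eqref{Poisson1}--\eqref{Poisson2}, a termwise bound when $\rho\le\eps$, and for $\rho\ge\eps$ the rewriting of the alternating sum as an $m$-th finite difference of $g(a)=a^{d-1}P(a\eta,x)$ (using $g(1)=0$) together with the iterated-integral bound and an estimate of $g^{(m)}$ that exploits the vanishing factor $a^2-1$. The only difference is cosmetic: you spell out via Leibniz and the structure of $(D^{-d/2})^{(n)}$ the derivative bound that the paper states as \eqref{est-Pois} with the remark that it ``easily follows.''
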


\begin{proof}
From the definition of $f_{\eps,\eta}(x)$ and \eqref{Poisson1}--\eqref{Poisson2} it readily follows
$f_{\eps,\eta}(x)$ can be represented in the form \eqref{col-3} or \eqref{col-4}.

From the harmonicity of the Poisson kernel we know that
$\int_\SS P(a\eta, x)d\sigma(x)=\omega_d P(a\eta, 0)=a^{-d+1}$, $a>1$, implying
$$
\int_\SS f_{\eps, \eta}(x)d\sigma(x)= \sum_{j=1}^m (-1)^{j+1} \binom{m}{j} =1,
$$
which confirms \eqref{colzani-2}.

To prove \eqref{colzani-1} we first observe that for $x, \eta\in \SS$ and $a>1$ (see \eqref{norm-rep})
\begin{equation}\label{rep-norm}
|x-a\eta|^2
= (1-a)^2+a\sin^2(\beta/2)
\quad\hbox{with}\;\; \beta:=\rho(x,\eta),
\end{equation}
and hence, using \eqref{Poisson-rep},
\begin{equation}\label{Rep-Poisson}
P(a\eta, x)=\frac{1}{a\omega_d}\frac{a^2-1}{[(a-1)^2+a\sin^2(\beta/2)]^{d/2}}.
\end{equation}
If $\rho(x, \eta)\le \eps$, then from above it readily follows that
$|P((1+j\eps)\eta, x)|\le c\eps^{-d+1}$.
This and the definition of $f_{\eps, \eta}(x)$ yield \eqref{colzani-1}.

Let $\rho(x, \eta)> \eps$. Clearly, $P(\eta, x)=0$ since $x, \eta\in \SS$, $x\ne \eta$.
Hence,
$$
f_{\eps, \eta}(x) = \sum_{j=0}^m (-1)^{j+1} \binom{m}{j} (1+j\eps)^{d-1}P((1+j\eps)\eta, x).
$$
Denote $g(u):= (1+u)^{d-1}P((1+u)\eta, x)$ with $x, \eta\in \SS$ fixed.
Then
$$
f_{\eps, \eta}(x)= (-1)^{m+1}\Delta_\eps^m g(0)
= (-1)^{m+1}\int_0^\eps\cdots\int_0^\eps g^{(m)}(u_1+\dots+u_m) du_1\dots du_m.
$$
We claim that
\begin{equation}\label{est-Pois}
|g^{(m)}(u)| \le \frac{c}{|x-(1+u)\eta|^{m+d-1}},
\quad 0<u<m,
\end{equation}
where $c$ is a constant depending only on $m$ and $d$.
Indeed, from \eqref{Rep-Poisson}
\begin{align*}
g(u) &= \frac{(2+u)(1+u)^{d-2}}{\omega_d}\frac{u}{(u^2+(1+u)\sin^2(\beta/2))^{d/2}}
\\
&=:\phi(u)u(u^2+(1+u)\sin^2(\beta/2))^{-d/2}.
\end{align*}
Using this representation of $g(u)$ it easily follows that \eqref{est-Pois} holds.

Finally, \eqref{est-Pois} coupled with \eqref{rep-norm} yields \eqref{colzani-1}.
\end{proof}

\section{Localized kernels on $\SS$ in terms of Newtonian kernels: Solution of Problem 3}\label{s4_2}

The solution of Problem~3 from the introduction is essentially contained in the following


\begin{thm}\label{thm:main}
Let $m\in\NN$, $d\ge 2$, $\eta\in\SS$, and $0<\eps\le 1$.
Set $a:=1+\eps$ and $\delta:=1-a^{-2}$.
Consider the function
\begin{equation*}
\cF_{\eps}(t) := \frac{(d/2)_{m-1}}{2m!}a^{2m}\delta^{2m-1} (a^2+1-2at)^{-d/2+1-m},\quad t\in[-1,1].
\end{equation*}
The function $\cF_{\eps}$ has these properties:
\begin{equation}\label{eq:2.3}
\cF_{\eps}(x\cdot\eta) = \frac{(d/2)_{m-1}}{2m!}a^{2m}\delta^{2m-1}	|x-a\eta|^{-d+2-2m},\quad x\in\SS,
\end{equation}
\begin{equation}\label{main-1}
0< \cF_{\eps}(x\cdot\eta) \le \frac{c_1\eps^{-d+1}}{(1+\eps^{-1}\rho(x, \eta))^{2m+d-2}},
\quad\forall x, \eta\in \SS,
\end{equation}
and
\begin{equation}\label{main-2}
\int_{\SS}\cF_{\eps}(x\cdot\eta)d\sigma(x) \ge c_2>0,
\quad\forall \eta\in \SS,
\end{equation}
where $c_1, c_2>0$ are constants depending only on $m$ and $d$.
Furthermore, $\cF_{\eps}(x\cdot\eta)$ is the restriction on $\SS$ of the harmonic function,
defined on $\RR^d\setminus\{a\eta\}$,
\begin{equation}\label{eq:2.1}
	\sF_{\eps,m}(a\eta,x):=q_0|x-a\eta|^{2-d}+\sum_{\ell=1}^m
	\frac{q_{\ell}\delta^{\ell-1} a^\ell}{\ell!(d-2)} (\eta\cdot\nabla)^\ell |x-a\eta|^{2-d}
\quad\mbox{if}~d\ge3,
\end{equation}
or
\begin{equation}\label{eq:2.2}
	\sF_{\eps,m}(a\eta,x):=q_0+\sum_{\ell=1}^m
	\frac{q_{\ell}\delta^{\ell-1} a^\ell}{\ell!} (\eta\cdot\nabla)^\ell \ln \frac{1}{|x-a\eta|} \quad\mbox{if}~d=2,
\end{equation}
where the coefficients $q_0,\dots, q_m$ are determined as the solution of the linear system of $m+1$ equations:
\begin{align}
	q_0+\sum_{\ell=1}^m (d/2)_{\ell-1}\frac{\delta^{\ell-1}}{2\ell!} q_{\ell}&=0,\nonumber \\
	\sum_{\ell=0}^{m-\nu}\left[\sum_{k=(\ell-\nu)_+}^{\ell}
	(-1)^{\ell-k} \binom{\nu}{\ell-k}(d/2+\nu-1)_k \frac{\delta^k}{k!}\right] q_{\nu+\ell} &=0,\label{eq:2.14b}\\
	\nu&=1,\dots,m-1,\nonumber\\
	q_m&=1.\nonumber
\end{align}
Here $(u)_0:=1$, $(u)_k:=u(u+1)\cdots(u+k-1)$
denotes the Pochhammer's symbol
and $(u)_+:=\max\{0,u\}$.
\end{thm}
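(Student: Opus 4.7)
The identity \eqref{eq:2.3} is immediate from $|x-a\eta|^2=a^2+1-2a(x\cdot\eta)$ on $\SS$. The substantive content of the theorem is therefore the claim that $\sF_{\eps,m}(a\eta,x)$ restricts to $\cF_\eps(x\cdot\eta)$ on $\SS$ when the $q_\ell$ are chosen by the stated linear system. I treat $d>2$ first (the case $d=2$ being analogous via \eqref{Maxwell-4}). Applying Maxwell's formula \eqref{Maxwell-3} to each $(\eta\cdot\nabla)^\ell|x-a\eta|^{2-d}$, using $C_\ell^{(\lambda)}(-z)=(-1)^\ell C_\ell^{(\lambda)}(z)$, and multiplying through by $r^{d-2+2m}$ (with $r:=|x-a\eta|$, $t:=x\cdot\eta$) reduces the desired equality to the polynomial identity
\begin{equation*}
q_0\, r^{2m}+\sum_{\ell=1}^m \frac{q_\ell\delta^{\ell-1}a^\ell}{d-2}\,r^{2m-\ell}\,C_\ell^{(d/2-1)}\!\left(\tfrac{a-t}{r}\right) \;=\; \frac{(d/2)_{m-1}}{2m!}a^{2m}\delta^{2m-1}.
\end{equation*}
Each summand on the left is genuinely a polynomial in $t$ of degree at most $m$, since $r^{2m-\ell}C_\ell^{(d/2-1)}((a-t)/r)$ expands into a sum of monomials $(a-t)^{\ell-2k}(a^2+1-2at)^{m-\ell+k}$ of degree $m-k$.

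The decisive substitution is $\sigma:=2(a-t)/a$, $\tau:=\sigma-\delta$, so that $r^2=a^2\tau$ and $a-t=a(\tau+\delta)/2$. Expanding $C_\ell^{(\lambda)}(z)=\sum_k(-1)^k\frac{(\lambda)_{\ell-k}}{k!(\ell-2k)!}(2z)^{\ell-2k}$ and then $(\tau+\delta)^{\ell-2k}$ by the binomial theorem, and dividing by $a^{2m}$, the left-hand side becomes a polynomial $P(\tau)$ and the identity becomes $P(\tau)=\frac{(d/2)_{m-1}}{2m!}\delta^{2m-1}$. Collecting coefficients produces three types of equations: (i)~vanishing of the $\tau^m$ coefficient, which, via $(d/2-1)_\ell/(d-2)=(d/2)_{\ell-1}/2$, is precisely the first equation in the theorem's linear system; (ii)~the $\tau^0$ coefficient receives a contribution only from $(\ell,k)=(m,0)$ and with $q_m=1$ equals exactly $\frac{(d/2)_{m-1}}{2m!}\delta^{2m-1}$; and (iii)~for $\nu=1,\dots,m-1$, the vanishing of the $\tau^{m-\nu}$ coefficient must be shown proportional to the $\nu$-th equation of \eqref{eq:2.14b}. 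For (iii), after reindexing $k\mapsto\ell'-k$, term-by-term agreement reduces to the Pochhammer identity $(\lambda)_{\nu+k}/\lambda=(d/2)_{\nu-1}(d/2+\nu-1)_k$ (both sides equal $(d/2)_{\nu+k-1}$) with $\lambda=d/2-1$; the proportionality factor is $c_\nu=(d/2)_{\nu-1}\delta^{2\nu-1}/(2\nu!)$. Unique solvability of the system then follows from its lower-triangular structure: when ordered $q_m,q_{m-1},\dots,q_0$, the $\nu$-th equation involves $q_\nu,\dots,q_m$ and its diagonal entry (the $\ell'=0$ term of $B_{\nu,0}=1$) is nonzero, so the $q_\ell$ may be solved for in decreasing order. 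I expect the chief obstacle to be precisely this index bookkeeping in (iii)---tracking the supports of the two sums (with truncations $(\ell-\nu)_+$) and reconciling the disparate powers of $\delta$ on the two sides---rather than any conceptual difficulty.

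The pointwise bound \eqref{main-1} is a direct consequence of \eqref{eq:2.3} and \eqref{dist-x-aeta}, using $\delta\asymp\eps$ and $a\asymp 1$, which together give $|\cF_\eps(x\cdot\eta)|\lesssim\eps^{2m-1}(\eps+\rho(x,\eta))^{-(2m+d-2)}$. Positivity in \eqref{main-2} is visible in \eqref{eq:2.3}; writing $\int_\SS\cF_\eps(x\cdot\eta)\,d\sigma(x)=\omega_{d-1}\int_0^\pi\cF_\eps(\cos\beta)\sin^{d-2}\beta\,d\beta$, using $a^2+1-2a\cos\beta\asymp\eps^2+\beta^2$, and rescaling $\beta=\eps u$ then shows this integral is bounded below by a positive constant depending only on $m$ and $d$. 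In dimension $d=2$, Maxwell's formula \eqref{Maxwell-4} with Chebyshev polynomials $T_\ell$ replaces \eqref{Maxwell-3}; the additive constant $q_0$ in \eqref{eq:2.2} plays the role of $q_0|x-a\eta|^{2-d}$ from \eqref{eq:2.1}, and an entirely analogous coefficient-matching (the Pochhammer identities specialized to their $\lambda\to 0$ limits) completes the proof.
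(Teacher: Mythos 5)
Your proposal is correct and follows essentially the same route as the paper: Maxwell's formula \eqref{Maxwell-3}--\eqref{Maxwell-4} plus the explicit Gegenbauer/Chebyshev expansions, reduction to a polynomial identity whose coefficient equations are exactly the first equation, the $\nu$-th equations of \eqref{eq:2.14b} (with the same proportionality factor $(d/2)_{\nu-1}\delta^{2\nu-1}/(2\nu!)$), and the normalization $q_m=1$, together with the elementary comparison $|x-a\eta|\asymp\eps+\rho(x,\eta)$ for \eqref{main-1} and a restriction of the integral to a cap of radius $\asymp\eps$ for \eqref{main-2}. Your substitution $\tau=2(a-t)/a-\delta=|x-a\eta|^2/a^2$ is just the paper's law-of-cosines relation $2\cos\theta=r+\delta r^{-1}$ (with $r=|a\eta-x|/a$) rewritten, so the coefficient matching in powers of $\tau$ coincides with the paper's matching in powers of $r^{-2}$.
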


\begin{rem}\label{rem:2}
In fact, the function $\sF_{\eps,m}(a\eta,x)$ from \eqref{eq:2.1} or \eqref{eq:2.2} for $x\in B^d$
is the harmonic extension of $\cF_{\eps}(x\cdot\eta)$ to $B^d$.
Note also that unlike \eqref{eq:2.1} identity \eqref{eq:2.2} contains
the constant term $q_0$ instead of a Newtonian kernel term like $q_0\ln \frac{1}{|x-a\eta|}$.
\end{rem}

Theorem~\ref{thm:main} immediately implies

\begin{cor}\label{cor-main}
Let $d \ge 2$, $M>d-1$.
Under the hypotheses of Theorem~\ref{thm:main} define
\begin{equation*}
f_{\eps,\eta}(x):=\sF_{\eps,m}(a\eta,x)\Big(\int_{\SS}\sF_{\eps,m}(a\eta,y) d\sigma(y)\Big)^{-1},
\quad x\in \RR^d\setminus\{a\eta\},
\end{equation*}
where $\sF_{\eps,m}$ is from \eqref{eq:2.1} or \eqref{eq:2.2} and $m=\left\lceil (M-d+2)/2\right\rceil$.
Then the function $f_{\eps,\eta}$ solves Problem~3 from the introduction.
\end{cor}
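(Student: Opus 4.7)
The plan is to derive this corollary as a bookkeeping consequence of Theorem~\ref{thm:main}, since that theorem already supplies all the substantive work. First I would observe that dividing $\sF_{\eps,m}(a\eta,x)$ by the scalar $\int_\SS \sF_{\eps,m}(a\eta,y)\,d\sigma(y)$ only rescales the coefficients $q_0,\dots,q_m$ in \eqref{eq:2.1} or \eqref{eq:2.2}. Consequently $f_{\eps,\eta}$ inherits the exact form \eqref{cond-23} (when $d>2$) or \eqref{cond-24} (when $d=2$) required by Problem~3, with the single pole $a_1 = a = 1+\eps$ and new coefficients proportional to the $q_\ell$.

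Next I would verify the integral normalization \eqref{cond-2}. By Remark~\ref{rem:2} together with \eqref{eq:2.3}, the restriction of $\sF_{\eps,m}(a\eta,\cdot)$ to $\SS$ coincides with $\cF_\eps(x\cdot\eta)$. Hence $F_\eps(x\cdot\eta) = \cF_\eps(x\cdot\eta)/\int_\SS \cF_\eps(y\cdot\eta)\,d\sigma(y)$, whose integral over $\SS$ equals $1$ by construction; the denominator is strictly positive (indeed bounded below by $c_2$) thanks to \eqref{main-2}, so the division is legitimate.

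For the localization \eqref{cond-1} I would combine the upper bound \eqref{main-1} with the lower bound \eqref{main-2} to obtain
\[
|F_\eps(x\cdot\eta)| \le \frac{c_1}{c_2}\cdot\frac{\eps^{-d+1}}{(1+\eps^{-1}\rho(x,\eta))^{2m+d-2}}.
\]
The choice $m = \lceil(M-d+2)/2\rceil$ is precisely what is needed so that $2m+d-2 \ge M$; then $(1+\eps^{-1}\rho(x,\eta))^{-(2m+d-2)} \le (1+\eps^{-1}\rho(x,\eta))^{-M}$ (since the base is $\ge 1$), and \eqref{cond-1} follows with $c = c_1/c_2$.

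There is no genuine obstacle here: each of the three requirements of Problem~3 falls out in one line from Theorem~\ref{thm:main} after reading off the form of $\sF_{\eps,m}$, the normalization, and the decay. The entire substantive content is absorbed in the proof of Theorem~\ref{thm:main}; the corollary merely records that the choice $m = \lceil(M-d+2)/2\rceil$ converts the decay exponent $2m+d-2$ supplied by that theorem into any prescribed $M > d-1$.
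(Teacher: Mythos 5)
Your proposal is correct and follows exactly the intended route: the paper gives no separate proof, stating only that the corollary is immediate from Theorem~\ref{thm:main}, and your bookkeeping (form \eqref{eq:2.1}--\eqref{eq:2.2} preserved under scalar division, normalization via \eqref{main-2}, decay \eqref{main-1} with $2m+d-2\ge M$ from $m=\lceil (M-d+2)/2\rceil$) is precisely the argument the authors intend.
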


We shall carry out the proof of Theorem~\ref{thm:main} in three steps.

\subsection{Proof of (\ref{eq:2.3})--(\ref{main-2})}

Representation \eqref{eq:2.3} is immediate from the definition of $\cF_{\eps}$ and \eqref{norm-rep}.

We claim that
\begin{equation}\label{simple-ineq}
5^{-1}(\eps+\rho(x, \eta)) \le |x-a\eta| \le 2(\eps+\rho(x, \eta)), \quad x, \eta\in\SS.
\end{equation}
Indeed, let $x, \eta\in\SS$ and denote by $\beta$ ($0\le\beta\le\pi$) the angle between $x$ and $\eta$.
Using $\eta\cdot x=\cos\rho(x,\eta)=\cos \beta$ in \eqref{norm-rep} we get
\begin{align*}
|x-a\eta|^2 = \sin^2\beta + (a-\cos\beta)^2
= \sin^2\beta + (\eps+2\sin^2(\beta/2))^2.
\end{align*}
Assume $0\le \beta\le\pi/2$. Using the obvious inequalities
$(2/\pi)\beta \le \sin\beta \le\beta$ we obtain
$(2/\pi)^2\beta^2+\eps^2\le |x-a\eta|^2 \le \beta^2+(\eps+\beta^2/2)^2$,
which implies (\ref{simple-ineq}).
In the case $\pi/2< \beta\le\pi$ inequalities (\ref{simple-ineq}) are trivial.
Now estimate (\ref{main-1}) readily follows by (\ref{eq:2.3}) and (\ref{simple-ineq}).

Also, from \eqref{eq:2.3} we derive
\begin{multline*}
\int_{\SS}\cF_{\eps}(x\cdot\eta)\,d\sigma(x)=\omega_{d-1}\int_{-1}^1 \cF_{\eps}(u) (1-u^2)^{(d-3)/2}du\\
=\frac{(d/2)_{m-1}}{2m!}a^{2m}\delta^{2m-1}\omega_{d-1}\int_{-1}^1 (a^2+1-2au)^{(-2m-d+2)/2} (1-u^2)^{(d-3)/2}du.
\end{multline*}
Restricting the interval of integration to $[1-\eps^2,1]$ and using that $a^2+1-2au\le 5\eps^2$ for $u$ in this range we get
\begin{align*}
\int_{\SS}\cF_{\eps}(x\cdot\eta)\,d\sigma(x)
&\ge c\eps^{2m-1} \int_{1-\eps^2}^1 \eps^{-2m-d+2} (1-u)^{(d-3)/2}du
\\
&\ge c\eps^{-d+1}\eps^{2((d-3)/2+1)}=c'
\end{align*}
with $c'>0$ depending only on $d$ and $m$. This proves \eqref{main-2}.

\subsection{Solution of linear system (\ref{eq:2.14b})}

Clearly, system \eqref{eq:2.14b} has an upper triangular matrix
with 1's on the main diagonal.
Hence
$q_0,\dots, q_m$ are uniquely determined by \eqref{eq:2.14b}.

Also from \eqref{eq:2.14b} we get by induction on $\nu=m-1,m-2,\dots,0$ that the $q_\ell$'s
satisfy
\begin{equation}\label{eq:2.0}
q_{\ell}=q_{\ell}(d,m,\delta)=\sum_{k=0}^{m-\ell}\alpha_{\ell,k}\delta^k,\quad \ell=0,1,\dots,m,
\end{equation}
with some coefficients $\alpha_{\ell,k}=\alpha_{\ell,k}(d,m)$ depending only on $d$ and $m$, where
$q_m=\alpha_{m,0}=1$.
Moreover, $\alpha_{\ell,k}(d,m)$ is a polynomial of $d$ of degree $k$ and, hence, $\alpha_{\ell,0}$ does not depend on $d$.
Observe also that $\alpha_{0,m}=0$, i.e. $q_0$ is a polynomial of degree $m-1$.
The $q_\ell$'s for $m=1,2,3,4$ are given in \remref{rem:3}.

\begin{lem}\label{lem:1}
For $m\in\NN$ the numbers $\alpha_{\ell}(m):=\alpha_{\ell,0}(d,m)$, $\ell=1,\dots,m$, satisfy
\begin{equation}\label{eq:3.5}
\alpha_\ell(m)=\frac{\ell(2m-\ell-1)!}{m!(m-\ell)!},\quad \ell=1,2,\dots,m,
\end{equation}
and $\alpha_0(m)=-\alpha_1(m)/2$.
\end{lem}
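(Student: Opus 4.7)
The plan is to take the constant term in $\delta$ of the system \eqref{eq:2.14b} and reduce the lemma to a purely combinatorial identity. Since $(d/2+\nu-1)_k\delta^k/k!$ at $\delta=0$ equals $1$ for $k=0$ and vanishes otherwise, in the $\nu$-th equation only the $k=0$ contribution $(-1)^\ell\binom{\nu}{\ell}$ survives---consistent with the observation, noted before the lemma, that $\alpha_{\ell,0}$ is independent of $d$. Together with $q_m=1$ and the first equation of \eqref{eq:2.14b} evaluated at $\delta=0$, this produces
\[
\alpha_m(m)=1,\qquad \alpha_0(m)+\tfrac{1}{2}\alpha_1(m)=0,\qquad \sum_{\ell=0}^{m-\nu}(-1)^{\ell}\binom{\nu}{\ell}\alpha_{\nu+\ell}(m)=0,\;\;\nu=1,\dots,m-1.
\]
The middle relation is already the claim $\alpha_0(m)=-\alpha_1(m)/2$, and the upper-triangular recurrence then pins down $\alpha_{m-1},\dots,\alpha_1$ uniquely from $\alpha_m(m)=1$. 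So the lemma reduces to verifying that the candidate $\alpha_\ell(m)=\ell(2m-\ell-1)!/(m!(m-\ell)!)$ satisfies this recurrence (the value $\alpha_m(m)=1$ is immediate).

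For the verification I would substitute the candidate into the recurrence and reindex by $j:=m-\nu-\ell$, setting $n:=m-\nu$, so that $\nu+\ell=m-j$ and $2m-\nu-\ell-1=m+j-1$. Using $(m+j-1)!/((m-1)!\,j!)=\binom{m+j-1}{m-1}$, the left-hand side becomes
\[
\frac{(-1)^{n}(m-1)!}{m!}\sum_{j=0}^{n}(-1)^{j}\binom{\nu}{n-j}(m-j)\binom{m+j-1}{m-1}.
\]
The identity $j\binom{m+j-1}{m-1}=m\binom{m+j-1}{m}$ rewrites $(m-j)\binom{m+j-1}{m-1}$ as $m\binom{m+j-1}{m-1}-m\binom{m+j-1}{m}$, splitting the sum into two pieces. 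Via the upper-negation identities $(-1)^j\binom{m+j-1}{m-1}=\binom{-m}{j}$ applied to the first piece and (after the shift $j\mapsto j+1$, legitimate since the $j=0$ term vanishes) $(-1)^j\binom{m+j}{m}=\binom{-m-1}{j}$ applied to the second, both pieces become Chu--Vandermonde convolutions and evaluate to $\binom{\nu-m}{n}$ and $\binom{\nu-m-1}{n-1}$ respectively.

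One last application of upper negation turns these into $(-1)^{n}\binom{2n-1}{n}$ and $(-1)^{n-1}\binom{2n-1}{n-1}$, so the bracket collapses to
\[
\binom{2n-1}{n}-\binom{2n-1}{n-1}=0,
\]
by the symmetry $\binom{2n-1}{n}=\binom{2n-1}{n-1}$. The only real obstacle is bookkeeping: aligning the two Vandermonde sums (the second needs the index shift) and tracking signs so that the two binomials in the final difference are complementary. Once this cancellation is in place, uniqueness of the solution to the upper-triangular system identifies $\alpha_\ell(m)$ with the formula in \eqref{eq:3.5}, completing the proof.
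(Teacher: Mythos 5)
Your argument is correct, and it diverges from the paper's proof at the decisive step. Both proofs begin identically: extract the constant term in $\delta$ from \eqref{eq:2.14b} (only $k=0$ survives, and $\binom{\nu}{\ell}=0$ for $\ell>\nu$ reconciles your upper limit $m-\nu$ with the paper's $\min\{\nu,m-\nu\}$), note that the resulting system is upper triangular with unit diagonal so its solution is unique, and read off $\alpha_0(m)=-\alpha_1(m)/2$ and $\alpha_m(m)=1$ directly. Where the paper then introduces the intermediate recursion $\alpha_k(m)=\alpha_{k+1}(m)+\alpha_{k-1}(m-1)$, proves by induction on $m$ (via the identity $\fD^\nu\alpha_\nu(m)=-\fD^{\nu-1}\alpha_{\nu-1}(m-1)$) that this recursion solves the finite-difference system, and only then matches the closed form \eqref{eq:3.5} against the recursion (a step it leaves as a direct check), you instead substitute the closed form into the $\delta=0$ system and kill each equation by a binomial computation: the reindexing $j=m-\nu-\ell$, the split $(m-j)\binom{m+j-1}{m-1}=m\binom{m+j-1}{m-1}-m\binom{m+j-1}{m}$, upper negation, and two Chu--Vandermonde convolutions reducing everything to $\binom{2n-1}{n}-\binom{2n-1}{n-1}=0$ with $n=m-\nu\ge1$. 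I checked the sign bookkeeping (the shifted second sum contributes $-\binom{\nu-m-1}{n-1}$, and the prefactor $(-1)^n$ cancels correctly) and it is sound; small cases ($m=4$, $\nu=2$) confirm it. What each route buys: yours is self-contained and closes the verification the paper only sketches; the paper's recursion exposes the ballot-number/Catalan structure highlighted in Remark~\ref{rem:1}, which your hypergeometric computation leaves implicit.
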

\begin{proof}
The numbers $\alpha_\ell(m)$
satisfy the limit case of \eqref{eq:2.14b} when $\delta=0$,
i.e. $\alpha_0(m)+\alpha_1(m)/2=0$ and
\begin{equation}\label{eq:2.17}
	\sum_{\ell=0}^{\min\{\nu,m-\nu\}} (-1)^\ell\binom{\nu}{\ell} \alpha_{\nu+\ell}(m)=0,\quad
	\nu=1,\dots,m-1;\qquad \alpha_m(m)=1.
\end{equation}
Note that \eqref{eq:2.17} has coefficients independent of $d$,
which also justifies that $\alpha_\ell(m)$ does not depends on $d$.

In order to remove the dependence of the upper bound of the sum in \eqref{eq:2.17} on $m-\nu$ we set $\alpha_\ell(m):=0$ for $\ell>m$.
Then \eqref{eq:2.17} becomes
\begin{equation}\label{eq:3.4}
\fD^\nu\alpha_\nu(m)=(-1)^\nu\delta_{\nu,m},\quad \nu=1,2,\dots,m,
\end{equation}
where $\delta_{\nu,m}$ is the Kroneker $\delta$ and
$\fD^\nu$ denotes the $\nu$th forward finite difference operator,
i.e. $\fD^\nu z_j:=\sum_{k=0}^\nu (-1)^{\nu+k}\binom{\nu}{k}z_{j+k}$.

We shall show that the solutions $\alpha_\nu(m)$ of \eqref{eq:3.4} for all $m\in\NN$
are uniquely determined by the following recursive procedure:
\begin{equation}\label{eq:3.1}
\alpha_k(m):=\delta_{k,m},\quad k\ge m,\quad m\in\NN;
\end{equation}
\begin{equation}\label{eq:3.2}
\alpha_k(m):=\alpha_{k+1}(m)+\alpha_{k-1}(m-1), \quad k=m-1,m-2,\dots,2,\quad m\ge 3;
\end{equation}
\begin{equation}\label{eq:3.3}
\alpha_1(m):=\alpha_2(m),\quad m\ge 2,
\end{equation}
where \eqref{eq:3.2} is applied inductively on $m$ and for given $m$ inductively on $k$.

In order to establish this we prove by induction on $m\in\NN$
that $\alpha_k(m)$, $k\in\NN$, from \eqref{eq:3.1}--\eqref{eq:3.3} satisfy \eqref{eq:3.4}.
Observe that \eqref{eq:3.4} trivially follows from \eqref{eq:3.3} for $\nu=1$, $m\ge 2$,
and from \eqref{eq:3.1} for $\nu=m$, $m\ge 1$.
Hence \eqref{eq:3.4} is true for $m=1$ and $m=2$.
For $m\ge 3$ assume \eqref{eq:3.4} is true for for $m-1$. Using  \eqref{eq:3.2} we get for $\nu=2,\dots,m-1$
\begin{multline*}
\fD^\nu\alpha_\nu(m)=\fD^{\nu-1}\alpha_{\nu+1}(m)-\fD^{\nu-1}\alpha_\nu(m)=-\fD^{\nu-1}(\alpha_\nu(m)-\alpha_{\nu+1}(m))\\
=-\fD^{\nu-1}\alpha_{\nu-1}(m-1)=0.
\end{multline*}
This verifies \eqref{eq:3.4} by induction.

Now, one establishes directly that the non-zero entries in \eqref{eq:3.1}--\eqref{eq:3.3} are given by \eqref{eq:3.5}
and hence \eqref{eq:3.5} solves \eqref{eq:2.17}.
This completes the proof of Lemma~\ref{lem:1}.
\end{proof}

\begin{rem}\label{rem:1}
The numbers $\alpha_\ell(m)$ from \eqref{eq:3.5} are known as \emph{ballot numbers}, see \cite[pp. 68, 76]{FS}.
The numbers $C_n=\alpha_1(n+1)$, $n=0,1,\dots$, are known as \emph{Catalan numbers}, see \cite[pp. 6, 17]{FS}.
Several values of $\alpha_\ell(m)$ are given in the following table.
\end{rem}

\begin{table}[ht]
	\centering
		\begin{tabular}{|l|rrrrrrrrrr|}
\hline
$m~\backslash~ \ell$&     1&     2&     3&     4&     5&     6&     7&     8&     9&     10\\
\hline
1&     1&     0&     0&     0&     0&     0&     0&     0&     0&     0\\
2&     1&     1&     0&     0&     0&     0&     0&     0&     0&     0\\
3&     2&     2&     1&     0&     0&     0&     0&     0&     0&     0\\
4&     5&     5&     3&     1&     0&     0&     0&     0&     0&     0\\
5&    14&    14&     9&     4&     1&     0&     0&     0&     0&     0\\
6&    42&    42&    28&    14&     5&     1&     0&     0&     0&     0\\
7&   132&   132&    90&    48&    20&     6&     1&     0&     0&     0\\
8&   429&   429&   297&   165&    75&    27&     7&     1&     0&     0\\
9&        1430&        1430&        1001&         572&         275&         110&          35&           8&           1&     0\\
10&        4862&        4862&        3432&        2002&        1001&         429&         154&          44&           9&           1\\
\hline
		\end{tabular}
\caption{$\alpha_\ell(m)$ for $1\le \ell, m \le 10$.}
\end{table}

\subsection{Completion of the proof of Theorem~\ref{thm:main}}

Using the fact
that $C_\ell^{(\mu)}$ and $T_\ell$ are even functions for even $\ell$ and odd functions for odd $\ell$
we rewrite the derivatives of the Newtonian kernel \eqref{Maxwell-3}--\eqref{Maxwell-4} as
\begin{align}
	\left(\eta\cdot\nabla\right)^\ell |x-a\eta|^{2-d}&= \ell!
	C_{\ell}^{(d/2-1)}\left(\frac{(a\eta-x)\cdot \eta}{|a\eta-x|}\right)|a\eta-x|^{-d+2-\ell},\label{eq:2.4}\\
	\left(\eta\cdot\nabla\right)^\ell \ln 1/|x-a\eta|&= (\ell-1)!
	T_{\ell}\left(\frac{(a\eta-x)\cdot \eta}{|a\eta-x|}\right)|a\eta-x|^{-\ell}.\label{eq:2.5}
\end{align}
By \cite[p. 442, (18.5.10)]{OLBC} for $\ell\ge 1$ we have
\begin{align}
C_{\ell}^{(d/2-1)}(t)
&=(d/2-1)\sum_{s=0}^{\left\lfloor \ell/2\right\rfloor}
\frac{(-1)^s(d/2)_{\ell-s-1}}{s!(\ell-2s)!}(2t)^{\ell-2s},\label{eq:2.6}\\
T_{\ell}(t)
&=\frac{\ell}{2}\sum_{s=0}^{\left\lfloor \ell/2\right\rfloor} \frac{(-1)^s(\ell-s-1)!}{s!(\ell-2s)!}(2t)^{\ell-2s}.\label{eq:2.7}
\end{align}
Now, by \eqref{eq:2.6} and \eqref{eq:2.4} substituted in the right-hand side of \eqref{eq:2.1} or
by \eqref{eq:2.7} and \eqref{eq:2.5} substituted in the right-hand side of \eqref{eq:2.2}
we get for $d\ge2$
\begin{multline}\label{eq:2.8}
	\FF_{\eps,m}(a\eta,x)=q_0|a\eta-x|^{2-d}\\
	+\sum_{\ell=1}^m
	q_{\ell}\delta^{\ell-1} a^\ell\sum_{s=0}^{\left\lfloor \ell/2\right\rfloor}
	\frac{(-1)^s(d/2)_{\ell-s-1}}{2s!(\ell-2s)!}
	\left(2\frac{(a\eta-x)\cdot \eta}{|a\eta-x|}\right)^{\ell-2s}
|a\eta-x|^{-d+2-\ell}.
\end{multline}
To find a convenient representation of the values of $\FF_{\eps,m}(a\eta,x)$ for $|x|=1$
we denote by $\theta$  the angle between the vectors $a\eta-x$ and $\eta$, $|a\eta-x|\cos\theta=(a\eta-x)\cdot\eta$.
By the Law of Cosines we have
\begin{equation*}
|a\eta-x|^2+a^2-2|a\eta-x|a\cos\theta=|x|^2,
\end{equation*}
which, with the notation
\begin{equation}\label{eq:2.9}
r:=|a\eta-x|/a,\quad  |x|=1,
\end{equation}
can be written as (recall $\delta=(a^2-1)/a^2$)
\begin{equation}\label{eq:2.10}
2\cos\theta=r+\delta r^{-1},\quad |x|=1.
\end{equation}

Using \eqref{eq:2.9},  \eqref{eq:2.10} and \eqref{eq:2.12} in \eqref{eq:2.8} we get
\begin{multline}\label{eq:2.11}
	\left.|a\eta-x|^{d-2}\FF_{\eps,m}(a\eta,x)\right|_{|x|=1}
	=q_0+\sum_{\ell=1}^m 	q_{\ell}\delta^{\ell-1} \sum_{s=0}^{\left\lfloor \ell/2\right\rfloor}
	b_{\ell,s}(r+\delta r^{-1})^{\ell-2s} r^{-\ell}\\
	=q_0+\sum_{\ell=1}^m 	q_{\ell}\delta^{\ell-1} \sum_{s=0}^{\left\lfloor \ell/2\right\rfloor}
	b_{\ell,s}(1+\delta r^{-2})^{\ell-2s} r^{-2s}=:A,
\end{multline}
where
\begin{equation}\label{eq:2.12}
b_{\ell,s}=b_{\ell,s}(d):=\frac{(-1)^s(d/2)_{\ell-s-1}}{2s!(\ell-2s)!}.
\end{equation}
We rewrite \eqref{eq:2.11} as follows.
In the expression
\begin{equation*}
A=q_0+\sum_{\ell=1}^m \sum_{s=0}^{\left\lfloor \ell/2\right\rfloor} \sum_{k=0}^{\ell-2s}
	q_{\ell} b_{\ell,s} \binom{\ell-2s}{k} \delta^{\ell+k-1}  r^{-2s-2k}
\end{equation*}
we set $k=\nu-s$ and get
\begin{multline*}
A
=q_0+\sum_{\ell=1}^m \sum_{s=0}^{\left\lfloor \ell/2\right\rfloor} \sum_{\nu=s}^{\ell-s}
	q_{\ell} b_{\ell,s} \binom{\ell-2s}{\nu-s} \delta^{\ell+\nu-s-1}  r^{-2\nu}\\
=q_0+\sum_{\ell=1}^m \sum_{\nu=0}^{\ell} \sum_{s=0}^{\min\{\nu,\ell-\nu\}}
	q_{\ell} b_{\ell,s} \binom{\ell-2s}{\nu-s} \delta^{\ell+\nu-s-1}  r^{-2\nu}.
\end{multline*}
Separating the terms for $\nu=0$ (which implies $s=0$) and
shifting the order of summation in $\ell$ and $\nu$ in the triple sum above we get
\begin{equation*}
A=q_0+\sum_{\ell=1}^m q_{\ell} b_{\ell,0} \delta^{\ell-1}
+\sum_{\nu=1}^{m} \sum_{\ell=\nu}^m \sum_{s=0}^{\min\{\nu,\ell-\nu\}} \!\!
	q_{\ell} b_{\ell,s} \binom{\ell-2s}{\nu-s} \delta^{\ell+\nu-s-1}  r^{-2\nu}.
\end{equation*}
In the triple sum we set $s=\ell-\nu-k$ with $(\ell-2\nu)_+\le k \le \ell-\nu$ and get
\begin{multline}\label{eq:2.13}
A=q_0+\sum_{\ell=1}^m q_{\ell} b_{\ell,0} \delta^{\ell-1}\\
+\sum_{\nu=1}^{m} \sum_{\ell=\nu}^m q_{\ell} \left[\sum_{k=(\ell-2\nu)_+}^{\ell-\nu}
	b_{\ell,\ell-\nu-k} \binom{2\nu-\ell+2k}{k} \delta^k\right]\delta^{2\nu-1}  r^{-2\nu}\\
=q_m b_{m,0}\delta^{2m-1}r^{-2m},
\end{multline}
where we used \eqref{eq:2.14b} for the last equality.
Indeed, if the summation index in the $\nu+1$-st row of \eqref{eq:2.14b} is changed from
$\ell$ to $\ell-\nu$ and  this equation is multiplied by $b_{\nu,0}$ then
\begin{equation*}
	\sum_{\ell=\nu}^m q_{\ell} \left[\sum_{k=(\ell-2\nu)_+}^{\ell-\nu}
	b_{\ell,\ell-\nu-k} \binom{2\nu-\ell+2k}{k} \delta^k\right]=0.
\end{equation*}
Now, \eqref{eq:2.13}, $q_m=1$, \eqref{eq:2.11} and \eqref{eq:2.9} yield
$\cF_{\eps}(x\cdot\eta)=\sF_{\eps,m}(a\eta,x)$ for $x\in\SS$,
which completes the proof of Theorem~\ref{thm:main}.
\qed

\smallskip

The asymptotic of $\sF_{\eps,m}(a\eta,0)$ as $\eps\to 0$
(and  of $\int_{\SS} \cF_{\eps}(x\cdot\eta)\,d\sigma(x)$ as well) is given by


\begin{prop}
Under the assumptions of \thmref{thm:main} we have
\begin{multline}\label{eq:2.40}
\sF_{\eps,m}(a\eta,0)
=a^{2-d}\left(q_0+\sum_{\ell=1}^m
	\frac{(\ell+d-3)!}{\ell!(d-2)!}q_{\ell}\delta^{\ell-1}\right)\\
=\frac{1}{2}\alpha_{1,0}(m)+ O(\eps)=\frac{1}{2}\frac{(2m-2)!}{m!(m-1)!}+ O(\eps).
\end{multline}
\end{prop}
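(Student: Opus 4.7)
The plan is to prove the identity by direct substitution in definitions \eqref{eq:2.1}--\eqref{eq:2.2} and then extract the asymptotic as $\eps\to0$ from the structure \eqref{eq:2.0} of the $q_\ell$'s combined with Lemma~\ref{lem:1}.

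First I would evaluate $\sF_{\eps,m}(a\eta,0)$ using the Maxwell formulas \eqref{eq:2.4}--\eqref{eq:2.5}. Setting $x=0$ gives $y:=x-a\eta=-a\eta$, so $y\cdot\eta/|y|=-1$ and $|y|=a$. Since $C_\ell^{(d/2-1)}(1)=\binom{\ell+d-3}{\ell}$ and $C_\ell^{(\mu)}$ has parity $(-1)^\ell$, one gets $C_\ell^{(d/2-1)}(-1)=(-1)^\ell\binom{\ell+d-3}{\ell}$; similarly $T_\ell(-1)=(-1)^\ell$. Substituting into \eqref{eq:2.4}--\eqref{eq:2.5} the factor $(-1)^\ell$ cancels, yielding
\begin{equation*}
(\eta\cdot\nabla)^\ell |x-a\eta|^{2-d}\big|_{x=0}=\ell!\binom{\ell+d-3}{\ell}a^{2-d-\ell},\qquad (\eta\cdot\nabla)^\ell\ln\tfrac{1}{|x-a\eta|}\big|_{x=0}=(\ell-1)!\,a^{-\ell}.
\end{equation*}
Plugging these into \eqref{eq:2.1} for $d\ge 3$ makes the factors $\ell!$ cancel in the denominator, and after collecting $a^{2-d}$ out we obtain the first equality in \eqref{eq:2.40}. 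For $d=2$ the same computation works with $\binom{\ell+d-3}{\ell}/(d-2)$ reinterpreted as $\frac{(\ell-1)!}{\ell!\,0!}=1/\ell$, which matches \eqref{eq:2.2} directly.

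Next, for the asymptotic, I would use that $\delta=1-(1+\eps)^{-2}=2\eps+O(\eps^2)$, so $\delta\to0$ as $\eps\to0$, and that $a^{2-d}=1+O(\eps)$. From \eqref{eq:2.0} each $q_\ell=\alpha_{\ell,0}(d,m)+O(\delta)=\alpha_\ell(m)+O(\eps)$. Hence
\begin{equation*}
q_\ell\,\delta^{\ell-1}=\alpha_\ell(m)\,\delta^{\ell-1}+O(\eps^\ell),
\end{equation*}
so the only term that does not vanish at $\eps=0$ inside the parentheses of \eqref{eq:2.40} is the $\ell=0$ constant $q_0|_{\delta=0}=\alpha_0(m)$ together with the $\ell=1$ term $\tfrac{(d-2)!}{(d-2)!}\alpha_1(m)=\alpha_1(m)$; all $\ell\ge 2$ terms contribute $O(\eps)$.

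Finally I would invoke Lemma~\ref{lem:1}, which gives $\alpha_0(m)=-\alpha_1(m)/2$, so the leading constant equals $\alpha_1(m)-\alpha_1(m)/2=\tfrac12\alpha_1(m)=\tfrac12\alpha_{1,0}(m)$, and the explicit formula \eqref{eq:3.5} with $\ell=1$ evaluates this to $\tfrac12\frac{(2m-2)!}{m!(m-1)!}$. Multiplying by $a^{2-d}=1+O(\eps)$ preserves the $O(\eps)$ error, completing \eqref{eq:2.40}. The only delicate point is the sign bookkeeping in the evaluation of the directional derivatives at $x=0$ (the double cancellation of $(-1)^\ell$ from Maxwell's formula and from the parity of $C_\ell^{(d/2-1)}$ or $T_\ell$ at $-1$); everything else is routine expansion in the small parameter $\delta$.
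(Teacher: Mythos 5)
Your proof is correct and follows essentially the same route as the paper: evaluate $\sF_{\eps,m}(a\eta,0)$ via the Maxwell formulas (the paper uses the rewritten forms \eqref{eq:2.4}--\eqref{eq:2.5} with argument $+1$, so your $(-1)^\ell$ cancellation is handled there automatically), then expand in $\delta=O(\eps)$ using \eqref{eq:2.0} and the relation $\alpha_0(m)=-\alpha_1(m)/2$ together with \eqref{eq:3.5}. The only cosmetic difference is that the paper gets $q_0=-q_1/2+O(\eps)$ directly from the first equation of \eqref{eq:2.14b}, whereas you invoke Lemma~\ref{lem:1}, which encodes the same fact.
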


\begin{proof}
In order to evaluate $\sF_{\eps,m}(a\eta,0)$ in the case $d\ge 3$ we substitute \eqref{eq:2.4} in \eqref{eq:2.1} and
use that $C_{\ell}^{(d/2-1)}(1)=\binom{\ell+d-3}{\ell}$ to get
\begin{equation}\label{eq:2.15}
	\sF_{\eps,m}(a\eta,0)=a^{2-d}\left(q_0+\sum_{\ell=1}^m
	\frac{(\ell+d-3)!}{\ell!(d-2)!}q_{\ell}\delta^{\ell-1}\right)=q_0+q_1+O(\eps).
\end{equation}
The validity of \eqref{eq:2.15} in the case $d=2$ is obtain
by substituting \eqref{eq:2.5} in \eqref{eq:2.2} and the use of $T_\ell(1)=1$.
Note that \eqref{eq:2.15} is the first equality in \eqref{eq:2.40}.

From the first equation of \eqref{eq:2.14b} we get $q_0=-q_1/2+O(\eps)$,
which together with \eqref{eq:2.15} and \eqref{eq:2.0} gives
\begin{equation}\label{eq:2.18}
\sF_{\eps,m}(a\eta,0)=q_1/2+O(\eps)=\alpha_{1,0}/2+O(\eps).
\end{equation}
Finally, \eqref{eq:2.18} and \eqref{eq:3.5} with $\ell=1$ prove \eqref{eq:2.40}.
\end{proof}

\begin{rem}\label{rem:3}
The values of the $q_\ell$'s and $\sF_{\eps,m}(a\eta,0)$ for $m=1,2,3,4$ are as follows:
\begin{itemize}
	\item If $m=1$, then $q_0=-\frac{1}{2}$, $q_1=1$,
\begin{equation*}
a^{d-2}\sF_{\eps,1}(a\eta,0)=\frac{1}{2}.	
\end{equation*}
Note that $\sF_{\eps,1}(a\eta,x)=\frac{1}{2}(a-|x|^2)/|a\eta-x|^{d}$,
i.e. $\lim_{\eps\to 0}\sF_{\eps,1}(a\eta,x)$ is a~constant multiple of the Poisson kernel.
	\item If $m=2$, then $q_0=-\frac{1}{2}+\frac{d}{8}\delta$, $q_1=1-\frac{d}{2}\delta$, $q_2=1$,
\begin{equation*}
a^{d-2}\sF_{\eps,2}(a\eta,0)=\frac{1}{2}+\frac{d-4}{8}\delta.	
\end{equation*}
	\item If $m=3$, then $q_0=-1+\frac{d+2}{4}\delta-\frac{d(d+2)}{48}\delta^2$, $q_1=2-(d+1)\delta+\frac{d(d+2)}{8}\delta^2$,
	$q_2=2-\frac{d+2}{2}\delta$, $q_3=1$,
\begin{equation*}
a^{d-2}\sF_{\eps,3}(a\eta,0)=1+\frac{d-6}{4}\delta+\frac{(d-4)(d-6)}{48}\delta^2.	
\end{equation*}
	\item If $m=4$, then $q_0=-\frac{5}{2}+\frac{5(d+4)}{8}\delta-\frac{(d+2)(d+4)}{16}\delta^2+\frac{d(d+2)(d+4)}{384}\delta^3$, \\
	$q_1=5-\frac{5(d+2)}{2}\delta+\frac{(3d+2)(d+4)}{8}\delta^2-\frac{d(d+2)(d+4)}{48}\delta^3$, \\
	$q_2=5-\frac{3d+10}{2}\delta+\frac{(d+2)(d+4)}{8}\delta^2$, $q_3=3-\frac{d+4}{2}\delta$, $q_4=1$,
\begin{equation*}
a^{d-2}\sF_{\eps,4}(a\eta,0)=\frac{5}{2}+\frac{5(d\!-\!8)}{8}\delta+\frac{(d\!-\!6)(d\!-\!8)}{16}\delta^2
	+\frac{(d\!-\!4)(d\!-\!6)(d\!-\!8)}{384}\delta^3.	
\end{equation*}
\end{itemize}
\end{rem}

\section{Localized kernels on $\SSS^1$: Second solution}\label{sec:S2}

In dimension $d=2$ we next identify
another linear combination of a single shift of  the Newtonian kernel directional derivatives
with excellent localization on the unit sphere $\SSS^1$.

\begin{thm}\label{thm:main-3}
Let $0<\eps\le 1$, $a=e^\eps$, $m\in\NN$, and $\eta\in {\mathbb S}^1$.
The function
\begin{equation}\label{eq:5.5}
\GG_{\eps}(x\cdot\eta):=\frac{2^{2m-2}}{m}\sum_{n\in\ZZ} \frac{\eps^{-1}}{(1+\eps^{-2}(\rho(x,\eta)+2\pi n)^2)^{m}},
\quad x\in {\mathbb S}^1,
\end{equation}
has the following properties:
\begin{equation}\label{eq:5.7}
0<\GG_{\eps}(x\cdot\eta)\le c \frac{\eps^{-1}}{(1+\eps^{-1}\rho(x,\eta))^{2m}}, \quad x\in\SSS^1,
\end{equation}
with a constant $c>0$ depending only on $m$,
and
\begin{equation}\label{eq:5.6}
\int_{\SSS^1} \GG_{\eps}(x\cdot\eta)\,d\sigma(x)=\frac{\pi(2m-2)!}{(m-1)!m!}.
\end{equation}
Moreover,
$\GG_{\eps}(x\cdot\eta)$ is the restriction to $\SSS^1$ of
the following harmonic function, defined on $\RR^2\backslash\{a\eta\}$,
\begin{equation}\label{eq:5.1}
	\sG_{\eps,m}(a\eta,x):=-\frac{1}{2}\frac{(2m-2)!}{m!(m-1)!}
	+\sum_{\ell=1}^m Q_\ell(2\eps)\frac{(2\eps)^{\ell-1}a^{\ell}}{\ell!}
	(\eta\cdot\nabla)^\ell \ln \frac{1}{|x-a\eta|},
\end{equation}
where
\begin{equation}\label{eq:5.2}
Q_\ell(u)
=\sum_{k=\ell}^{m}\frac{\ell(2m-k-1)!}{m!(m-k)!}\frac{A_{k-1,\ell-1}}{(k-1)!}u^{k-\ell},\quad 1\le \ell\le m,
\end{equation}
\begin{equation}\label{eq:5.3}
A_{k,\ell}=\sum_{\nu=\ell}^k (-1)^{\nu-\ell}\binom{\nu}{\ell}\nu!S_{k,\nu},\quad 0\le \ell\le k,
\end{equation}
and $S_{k,\nu}$ denote the Stirling numbers of the second kind, defined by
\begin{equation}\label{eq:5.4}
u^k=\sum_{\nu=0}^k S_{k,\nu}u(u-1)\cdots(u-\nu+1),\quad k=0,1,\dots.
\end{equation}
Note that $S_{k,0}=\delta_{k,0}$, $S_{k,k}=1$.

\end{thm}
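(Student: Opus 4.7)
The theorem splits into three claims: the pointwise bound \eqref{eq:5.7}, the integral identity \eqref{eq:5.6}, and the harmonic identification $\GG_\eps(x\cdot\eta)=\sG_{\eps,m}(a\eta,x)$ on $\SSS^1$. The first two are elementary. For \eqref{eq:5.7} I would isolate the $n=0$ summand $\eps^{-1}(1+\eps^{-2}\rho^2)^{-m}$, which already matches the target bound after applying $1+\eps^{-2}\rho^2\ge c(1+\eps^{-1}\rho)^2$. For $n\ne 0$ and $\rho\in[0,\pi]$ the inequality $|\rho+2\pi n|\ge\pi(2|n|-1)$ makes the tail of order $\eps^{2m-1}$, which is absorbed into the target bound since $\eps\le 1$ and $\rho\le\pi$. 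For \eqref{eq:5.6}, parameterizing $\SSS^1$ by the signed angle $\alpha\in(-\pi,\pi]$, using evenness in $\alpha$, and unfolding the sum-plus-integral over the tiling $\{[2\pi n-\pi,2\pi n+\pi]\}_{n\in\ZZ}$ of $\RR$, reduces the computation to $\int_\RR(1+u^2)^{-m}du=\pi(2m-2)!/(2^{2m-2}((m-1)!)^2)$, whence the stated value follows.

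The main step is the identification on $\SSS^1$, which I would carry out by matching Fourier series on both sides. From $a=e^\eps$ and the classical expansion (with $\theta=\rho(x,\eta)$)
\begin{equation*}
\ln\frac{1}{|x-a\eta|}=-\eps+\sum_{n\ge 1}\frac{e^{-n\eps}}{n}\cos(n\theta),
\end{equation*}
together with the operator identity $\eta\cdot\nabla_x=-\partial_a$ on functions of $x-a\eta$, one derives
\begin{equation*}
(\eta\cdot\nabla)^\ell\ln\frac{1}{|x-a\eta|}=a^{-\ell}(\ell-1)!+a^{-\ell}\sum_{n\ge 1}\frac{(n+\ell-1)!}{n!}a^{-n}\cos(n\theta).
\end{equation*}
Substituting into \eqref{eq:5.1} expresses $\sG_{\eps,m}|_{\SSS^1}$ as a cosine series whose $n$th Fourier coefficient is $e^{-n\eps}R_n(\eps)$, where
\begin{equation*}
R_n(\eps):=\sum_{\ell=1}^m Q_\ell(2\eps)(2\eps)^{\ell-1}\frac{(n+\ell-1)!}{\ell!\,n!}
\end{equation*}
is a polynomial of degree $m-1$ in $n$. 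Independently, Poisson summation applied to $g_m(t):=\eps^{2m-1}(\eps^2+t^2)^{-m}$ yields
\begin{equation*}
\GG_\eps(\theta)=\frac{2^{2m-2}}{2\pi m}\sum_{k\in\ZZ}\hat g_m(k)\,e^{ik\theta},
\end{equation*}
with $\hat g_m(k)=\pi e^{-\eps|k|}\tilde P_{m-1}(\eps|k|)$, where $\tilde P_{m-1}$ is the explicit polynomial of degree $m-1$ produced by the Cauchy residue evaluation of $\int_\RR(1+s^2)^{-m}e^{-ius}ds$ at the $m$th-order pole $s=-i$.

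Matching $\cos(n\theta)$-coefficients then reduces the identification to the polynomial identity $R_n(\eps)=\frac{2^{2m-2}}{m}\tilde P_{m-1}(n\eps)$ for all $n\in\NN$, together with a separate check of the constant Fourier coefficient which follows from \eqref{eq:5.6}. Since both sides are polynomials of degree $m-1$ in $n$, matching monomial coefficients produces an upper-triangular linear system in $Q_m,Q_{m-1},\dots,Q_1$ with $m$ equations uniquely determining these unknowns. The crux is then to verify that the closed form \eqref{eq:5.2}--\eqref{eq:5.3} solves this system. That verification requires converting $R_n(\eps)$ from the rising-factorial basis $\{(n+\ell-1)!/n!\}$ into the monomial basis $\{n^j\}$ used for $\tilde P_{m-1}(n\eps)$; this inversion is governed by the Stirling numbers of the second kind through $n^k=\sum_\nu S_{k,\nu}(n)_\nu^{\mathrm{fall}}$, and the coefficients $A_{k,\ell}$ in \eqref{eq:5.3} are precisely the entries of the resulting inverse transformation, while the factorial prefactors $\ell(2m-k-1)!/(m!(m-k)!)$ in \eqref{eq:5.2} trace back to the explicit coefficients of $\tilde P_{m-1}$ from the residue computation. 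This combinatorial bookkeeping is the main obstacle and the real content of the proof.
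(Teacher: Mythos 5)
Your handling of \eqref{eq:5.7} and \eqref{eq:5.6} is correct (isolating the $n=0$ term and absorbing the $O(\eps^{2m-1})$ tail; unfolding the periodization to $\int_\RR(1+u^2)^{-m}du$), and your overall strategy for the main identity --- Poisson summation for $\GG_\eps$ on one side, the cosine expansion $\ln\frac{1}{|x-a\eta|}=-\eps+\sum_{n\ge1}\frac{e^{-n\eps}}{n}\cos n\theta$ together with $\eta\cdot\nabla=-\partial_a$ on the other, then matching Fourier coefficients --- is sound and close in spirit to the paper, which also rests on the Poisson summation formula and the Fourier transform of $(1+u^2)^{-m}$. The reduction to the polynomial identity $R_n(\eps)=\frac{2^{2m-2}}{m}\tilde P_{m-1}(n\eps)$ is correct, and your constant-term bookkeeping does close once that identity is known for $n\ge1$. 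However, there is a genuine gap: the actual content of the theorem is that the \emph{explicit} coefficients \eqref{eq:5.2}--\eqref{eq:5.3} make the two expansions agree, and you stop exactly there, declaring the ``combinatorial bookkeeping'' to be the main obstacle without carrying it out. Concretely, two computations are missing. First, the explicit evaluation
\begin{equation*}
\int_\RR(1+s^2)^{-m}e^{-iws}\,ds=\pi e^{-|w|}\sum_{k=1}^{m}\frac{(2m-k-1)!\,2^{k-1}}{(k-1)!\,(m-k)!\,(m-1)!\,2^{2m-2}}\,|w|^{k-1},
\end{equation*}
which the paper obtains from modified Bessel functions of half-odd order (a residue computation works equally well, but you must actually produce these coefficients, since they are what become the prefactors $\ell(2m-k-1)!/(m!(m-k)!)$ in \eqref{eq:5.2}). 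Second, the verification that with $A_{k,\ell}$ defined by \eqref{eq:5.3}--\eqref{eq:5.4} one has $n^{k-1}=\sum_{\ell=1}^{k}A_{k-1,\ell-1}\binom{n+\ell-1}{\ell-1}$ for all $n\ge0$, equivalently $\sum_{n\ge0}n^{k-1}t^n=\sum_{\ell=1}^{k}A_{k-1,\ell-1}(1-t)^{-\ell}$; this is precisely the conversion between your basis $\{(n+\ell-1)!/(\ell!\,n!)\}$ and the monomials $n^{k-1}$, and it is what shows your triangular system is solved by \eqref{eq:5.2}. Neither step is deep, but neither is present, so the identification of $\GG_\eps$ with $\sG_{\eps,m}$ on $\SSS^1$ --- the main assertion --- is not proved.

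For comparison, the paper avoids the ``solve a system, then verify the closed form'' detour by deriving the coefficients constructively: after Poisson summation it resums $\sum_{n}|n|^{k-1}e^{-|n|\eps}e^{-2\pi inu}$ via the generating-function lemma (your identity (ii), with $t=a^{-1}e^{\mp2\pi iu}$) into terms $a^\ell\,\mathrm{Re}\{(a-z)^{-\ell}\}$, and then the Maxwell formula \eqref{Maxwell-4} identifies $\mathrm{Re}\{(a-z)^{-\ell}\}$ with $\frac{1}{(\ell-1)!}(\eta\cdot\nabla)^\ell\ln\frac{1}{|x-a\eta|}$ on $\SSS^1$, so \eqref{eq:5.2}--\eqref{eq:5.3} drop out automatically rather than being checked a posteriori. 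If you complete your two missing computations, your Fourier-coefficient matching would constitute a legitimate, essentially equivalent proof; as written it is an outline that defers the decisive step.
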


The proof of \thmref{thm:main-3} is based on several auxiliary statements.

\begin{lem}\label{lem:2}
Let $m\in\NN$, $\eps>0$ and $\GGG_m(u):=2\pi \eps^{-1}(1+(2\pi \eps^{-1}u)^2)^{-m}$ for $u\in\RR$.
Then the Fourier transform of $\GGG_m$ has the representation
\begin{equation}\label{eq:5}
\hat{\GGG}_m(v) :=\int_{\RR} \GGG_m(u)e^{-iuv}\,du
=  e^{-|v|\eps/(2\pi)} \sum_{k=1}^{m} \beta_{m-1,k-1} \left(\frac{|v|\eps}{2\pi}\right)^{k-1},
\end{equation}
where
\begin{equation}\label{eq:beta}
\beta_{m,k}:=\frac{\pi(2m-k)!2^k}{k!(m-k)!m!2^{2m}},\quad 0\le k\le m.
\end{equation}
\end{lem}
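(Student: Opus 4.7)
My plan is to reduce the integral defining $\hat{\GGG}_m(v)$ to the standard one-dimensional Fourier integral of $(1+w^2)^{-m}$ and then evaluate the latter by the residue theorem. The substitution $w = 2\pi\eps^{-1}u$ together with $s := \eps v/(2\pi)$ converts the integral in \eqref{eq:5} into
\[
\hat{\GGG}_m(v) = I_m(s),\qquad I_m(s):=\int_\RR (1+w^2)^{-m}e^{-isw}\,dw,
\]
and since $I_m$ is even in $s$ (the sine part of the exponential is odd in $w$), it is enough to evaluate $I_m(s)$ for $s>0$ and then replace $s$ by $|s|$ throughout.

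For $s>0$ the factor $e^{-isw}$ decays as $\Im w\to -\infty$, so I would close the contour in the lower half-plane and collect the residue at the unique enclosed pole $w=-i$, which has order $m$. The residue theorem (with its clockwise orientation) gives
\[
I_m(s) = -\frac{2\pi i}{(m-1)!}\left.\frac{d^{m-1}}{dw^{m-1}}\frac{e^{-isw}}{(w-i)^m}\right|_{w=-i}.
\]
Expanding by the Leibniz rule, using $\frac{d^{m-1-k}}{dw^{m-1-k}}(w-i)^{-m}=(-1)^{m-1-k}\frac{(2m-k-2)!}{(m-1)!}(w-i)^{k+1-2m}$, and evaluating at $w=-i$, the $k$-th summand ($0\le k\le m-1$) contributes
\[
\binom{m-1}{k}(-is)^k e^{-s}\cdot(-1)^{m-1-k}\frac{(2m-k-2)!}{(m-1)!}(-2i)^{k+1-2m}.
\]

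The remaining step is arithmetic: collect the factors of $i$ from $-2\pi i$, $(-is)^k$, and $(-2i)^{k+1-2m}=(-1)^{k+m+1}2^{k+1-2m}i^{k+1}$, observe that they combine into the real positive constant $2\pi$, simplify the resulting factorials, and re-index $k\mapsto k-1$ so that the sum runs over $k=1,\dots,m$. The coefficient of $s^{k-1}e^{-s}$ then equals $\frac{\pi(2m-k-1)!\,2^{k-1}}{(k-1)!(m-k)!(m-1)!\,2^{2m-2}}$, which is precisely $\beta_{m-1,k-1}$ from \eqref{eq:beta}; replacing $s$ by $|s|$ finishes the proof. The only delicate point is the sign and $i$-power bookkeeping in Leibniz's expansion, which I would double-check against the cases $m=1$ (giving $I_1(s)=\pi e^{-|s|}$) and $m=2$ (giving $I_2(s)=\tfrac\pi2(1+|s|)e^{-|s|}$).
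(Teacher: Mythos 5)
Your proof is correct, and it takes a genuinely different route from the paper. The paper evaluates $\hat{\GGG}_m$ by table lookup: it invokes the Fourier cosine transform identity from Erd\'elyi's tables, which expresses the integral through the modified Bessel function $K_{m-1/2}$, and then uses the closed-form expansion of the modified spherical Bessel function $\mathbf{\textsf{k}}_{m-1}$ (a finite sum of the form $e^{-z}\sum_\nu c_\nu z^{-\nu-1}$) to arrive at \eqref{eq:5}. You instead reduce to $I_m(s)=\int_\RR(1+w^2)^{-m}e^{-isw}\,dw$ and compute it directly by residues at the order-$m$ pole $w=-i$; I checked your Leibniz expansion and the sign/$i$ bookkeeping, and the resulting coefficient of $s^{k-1}e^{-s}$ is indeed $\beta_{m-1,k-1}$, consistent with the checks $I_1(s)=\pi e^{-|s|}$ and $I_2(s)=\tfrac{\pi}{2}(1+|s|)e^{-|s|}$. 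Your argument is self-contained and elementary (no special-function identities needed), at the cost of the combinatorial bookkeeping you flag; the paper's argument outsources that bookkeeping to known Bessel-function formulas. The only detail you leave implicit is the vanishing of the arc contribution when closing the contour, which is immediate from the $O(|w|^{-2m})$ decay of the integrand, so there is no genuine gap.
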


\begin{proof}
We have $\GGG_m(u)=b^{2m-1}(b^2+u^2)^{-m}$ with $b:=\eps/(2\pi)$.
Clearly, the function $\GGG_m$ is even. Hence, it suffices to prove \eqref{eq:5} only for $v\ge 0$.
From identity 1.3.7 in \cite[p.11]{E} (which gives the Fourier cosine transform) with $\nu=m-\frac{1}{2}$ we get
\begin{equation}\label{eq:2}
\hat{\GGG}_m(v) = 2b^{2m-1}\pi^{1/2}\left(\frac{v}{2b}\right)^{m-1/2} \Gamma(m)^{-1} K_{m-1/2}(bv),
\end{equation}
where $K_{m-1/2}$ is the modified Bessel function of the second kind for half an odd integer index.
According to identity 10.47.9 in \cite[p. 262]{OLBC} $K_{m-1/2}$ is related to
the modified spherical Bessel function $\mathbf{\textsf{k}}_{m-1}$ by
\begin{equation}\label{eq:3}
 K_{m-1/2}(z) = \sqrt{\frac{2z}{\pi}}\mathbf{\textsf{k}}_{m-1}(z)
\end{equation}
and $\mathbf{\textsf{k}}_{m-1}$ has the explicit form (identities 10.49.12 and 10.49.1 in \cite[p.264]{OLBC})
\begin{equation}\label{eq:4}
 \mathbf{\textsf{k}}_{m-1}(z) = \frac{\pi}{2}e^{-z}\sum_{\nu=0}^{m-1} \frac{(m-1+\nu)!}{(m-1-\nu)!\nu!2^\nu}z^{-\nu-1}.
\end{equation}
Now \eqref{eq:5} follows from \eqref{eq:2}, \eqref{eq:3} and \eqref{eq:4}.
\end{proof}

\begin{lem}\label{lem:4}
Let $k\in\NN$ and $t\in\CC$, $|t|<1$. Then
\begin{equation}\label{eq:8}
\sum_{n=0}^\infty n^{k-1} t^n
=\sum_{\ell=1}^k A_{k-1,\ell-1} (1-t)^{-\ell},
\end{equation}
where $A_{k,\ell}$ are defined in \eqref{eq:5.3}.
\end{lem}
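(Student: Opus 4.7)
The plan is to expand the monomial $n^{k-1}$ as a linear combination of falling factorials via the Stirling numbers of the second kind (the defining identity \eqref{eq:5.4} applied with $u=n$), then use the classical generating function of falling factorials, and finally re-expand the result around $1-t$ so as to recognise the coefficients $A_{k-1,\ell-1}$.

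First I would write $n^{k-1}=\sum_{\nu=0}^{k-1} S_{k-1,\nu}\, n(n-1)\cdots(n-\nu+1)$, which is valid for every integer $n\ge 0$ (the falling-factorial factor with $\nu>n$ vanishes, so there is no convergence issue). Substituting into $\sum_{n\ge 0} n^{k-1}t^n$ and using the standard identity $\sum_{n=0}^\infty n(n-1)\cdots(n-\nu+1)t^n=\nu!\,t^\nu(1-t)^{-\nu-1}$, obtained by differentiating the geometric series $\nu$ times and multiplying by $t^\nu$, yields $\sum_{n=0}^\infty n^{k-1}t^n=\sum_{\nu=0}^{k-1} S_{k-1,\nu}\,\nu!\,t^\nu(1-t)^{-\nu-1}$. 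To turn this into a sum of pure powers $(1-t)^{-\ell}$, I would write $t=1-(1-t)$ and expand $t^\nu=\sum_{j=0}^\nu \binom{\nu}{j}(-1)^j(1-t)^j$, so that $\nu!\,t^\nu(1-t)^{-\nu-1}=\sum_{j=0}^\nu(-1)^j\binom{\nu}{j}\nu!\,(1-t)^{j-\nu-1}$. Reindexing by $\ell=\nu+1-j$ and using $\binom{\nu}{\nu+1-\ell}=\binom{\nu}{\ell-1}$ rewrites this as $\sum_{\ell=1}^{\nu+1}(-1)^{\nu+1-\ell}\binom{\nu}{\ell-1}\nu!\,(1-t)^{-\ell}$.

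The last step is to swap the order of summation in $(\nu,\ell)$: for fixed $\ell\in\{1,\dots,k\}$, the variable $\nu$ ranges over $\{\ell-1,\dots,k-1\}$, so the double sum becomes $\sum_{\ell=1}^{k}(1-t)^{-\ell}\sum_{\nu=\ell-1}^{k-1}(-1)^{\nu+1-\ell}\binom{\nu}{\ell-1}\nu!\,S_{k-1,\nu}$. The inner sum is, after the trivial relabelling $\nu'=\nu$, $\ell'=\ell-1$, precisely $A_{k-1,\ell-1}$ by the definition \eqref{eq:5.3}, which gives the claimed formula. As a sanity check, for $k=1$ only $\nu=0$ contributes, $S_{0,0}=1$ gives $A_{0,0}=1$, and one recovers $\sum t^n=(1-t)^{-1}$; for $k=2$ the formula returns $A_{1,0}(1-t)^{-1}+A_{1,1}(1-t)^{-2}=-(1-t)^{-1}+(1-t)^{-2}=t(1-t)^{-2}$, in agreement with $\sum nt^n$.

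There is no real obstacle: all series converge absolutely on $|t|<1$, so the termwise manipulations are justified. The only thing that needs care is the bookkeeping in the double-sum swap and the verification that the algebraic identity emerging from the $\nu$-sum matches the definition of $A_{k-1,\ell-1}$ exactly; this is a purely combinatorial check that proceeds without any analytic input beyond the geometric series.
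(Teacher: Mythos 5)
Your proposal is correct and follows essentially the same route as the paper: expand $n^{k-1}$ in falling factorials via the Stirling numbers, use the $\nu$-fold differentiated geometric series to get $\nu!\,t^\nu(1-t)^{-\nu-1}$, re-expand $t^\nu$ in powers of $1-t$ by the binomial formula, and swap the summation to recognise $A_{k-1,\ell-1}$ from \eqref{eq:5.3}. The only differences are cosmetic (your explicit reindexing $\ell=\nu+1-j$ and sum interchange versus the paper's more compact presentation), so nothing further is needed.
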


\begin{proof}
Identity \eqref{eq:8} for $k=1$ reduces to the geometric series $\sum_{n=0}^\infty t^n=(1-t)^{-1}$. Let $k\ge 2$.
We differentiate the previous identity $\nu$ times, then multiply by $t^\nu$
and finaly apply the binomial formula to obtain
\begin{multline*}
\sum_{n=0}^\infty n(n-1)\dots(n-\nu+1)t^n=\nu! t^\nu (1-t)^{-\nu-1}\\
=\nu! \sum_{\ell=0}^\nu \binom{\nu}{\ell} (t-1)^\ell (1-t)^{-\nu-1}
=\nu! \sum_{\ell=0}^\nu (-1)^{\nu-\ell}\binom{\nu}{\ell} (1-t)^{-\ell-1}.
\end{multline*}
This coupled with \eqref{eq:5.4}, where $k$ replaced by $k-1$, leads to
\begin{multline*}
\sum_{n=0}^\infty n^{k-1} t^n=\sum_{\nu=0}^{k-1} S_{k-1,\nu}\sum_{n=0}^\infty n(n-1)\dots(n-\nu+1)t^n\\
=\sum_{\nu=0}^{k-1} S_{k-1,\nu}\nu! \sum_{\ell=0}^\nu (-1)^{\nu-\ell}\binom{\nu}{\ell} (1-t)^{-\ell-1},
\end{multline*}
which proves the lemma.
\end{proof}

\begin{thm}\label{thm:1}
Let $m\in\NN$, $\eps>0$, $a=e^\eps$, $\GGG_m(u):=2\pi \eps^{-1}(1+(2\pi \eps^{-1}u)^2)^{-m}$ for $u\in\RR$, and $z=e^{-2\pi i u}$.
Then
\begin{multline}\label{eq:9}
\sum_{\nu\in\ZZ} \GGG_m(\nu+u)\\
= -\beta_{m-1,0} +2\sum_{\ell=1}^{m} \sum_{k=\ell}^{m} \beta_{m-1,k-1}A_{k-1,\ell-1} \eps^{k-1}a^\ell
Re\left\{(a-z)^{-\ell}\right\}.
\end{multline}
\end{thm}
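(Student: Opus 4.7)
The plan is to recognize the left-hand side of \eqref{eq:9} as a periodization of $\GGG_m$, apply the Poisson summation formula, and then combine the closed form of $\hat\GGG_m$ provided by Lemma~\ref{lem:2} with the power-series identity of Lemma~\ref{lem:4}. Since $\GGG_m$ is smooth with $\GGG_m(u)=O(|u|^{-2m})$ and since $\hat\GGG_m$ decays exponentially by \eqref{eq:5}, both sides of
\[
\sum_{\nu\in\ZZ}\GGG_m(\nu+u)=\sum_{n\in\ZZ}\hat\GGG_m(2\pi n)\,e^{2\pi i n u}
\]
are absolutely convergent and Poisson summation is classical. Using $a=e^\eps$, so that $e^{-|n|\eps}=a^{-|n|}$, the right-hand side splits as the $n=0$ contribution (which equals $\beta_{m-1,0}$, because only the $k=1$ summand in \eqref{eq:5} survives under the convention $0^0=1$) plus a sum over $n>0$ and a sum over $n<0$.

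Substituting \eqref{eq:5} into the two tails and writing $w_+:=a^{-1}e^{2\pi i u}=a^{-1}\bar z$ and $w_-:=a^{-1}e^{-2\pi i u}=a^{-1}z$, both of modulus $a^{-1}<1$, each tail takes the form $\sum_{k=1}^m\beta_{m-1,k-1}\eps^{k-1}\sum_{n\ge 1}n^{k-1}w_\pm^n$. Lemma~\ref{lem:4} evaluates $\sum_{n\ge 0}n^{k-1}w^n$; subtracting the $n=0$ term (which equals $1$ when $k=1$ and $0$ otherwise) gives
\[
\sum_{n\ge 1}n^{k-1}w_\pm^n=\sum_{\ell=1}^k A_{k-1,\ell-1}(1-w_\pm)^{-\ell}-\delta_{k,1}.
\]
Since $1-w_+=a^{-1}(a-\bar z)$ and $1-w_-=a^{-1}(a-z)$, with $\bar z$ the complex conjugate of $z$ and $a$ real, one obtains $(1-w_+)^{-\ell}+(1-w_-)^{-\ell}=2a^\ell Re\{(a-z)^{-\ell}\}$. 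The two copies of the $-\delta_{k,1}$ correction sum to $-2\beta_{m-1,0}$, which combines with the $+\beta_{m-1,0}$ from the $n=0$ term to produce the constant $-\beta_{m-1,0}$ appearing in \eqref{eq:9}. Finally swapping $\sum_{k=1}^m\sum_{\ell=1}^k$ for $\sum_{\ell=1}^m\sum_{k=\ell}^m$ yields exactly the nested sum on the right of \eqref{eq:9}.

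The main obstacle is not analytic but combinatorial: the off-by-one reconciliation between the convention $0^0=1$ (which makes the $n=0$ term in the Fourier series carry a nontrivial contribution from $k=1$) and the fact that Lemma~\ref{lem:4} is stated as a sum from $n=0$ rather than $n=1$. Tracking both effects correctly is what produces the precise coefficient $-\beta_{m-1,0}$, rather than $0$ or $+\beta_{m-1,0}$, in front of the constant. All other manipulations---reindexing, the $z\leftrightarrow\bar z$ symmetry turning the pair of conjugate tails into $2\,Re\{\cdot\}$, and the final reordering of summation---are routine bookkeeping.
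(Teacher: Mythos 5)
Your proposal is correct and follows essentially the same route as the paper: Poisson summation combined with the closed form of $\hat{\GGG}_m$ from Lemma~\ref{lem:2} and the series identity of Lemma~\ref{lem:4}, with the conjugate tails $t=a^{-1}z$ and $t=a^{-1}\bar z$ paired into $2\,Re\{(a-z)^{-\ell}\}$; your bookkeeping of the $n=0$ term and the two $-\delta_{k,1}$ corrections reproduces exactly the paper's constant $-\beta_{m-1,0}$ (the paper merely packages this in a single bilateral-sum identity, its \eqref{eq:8b}).
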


\begin{proof}
Applying \lemref{lem:2} and the Poisson summation formula:
\begin{equation*}
	\sum_{\nu=-\infty}^\infty \GGG_m(\nu+u)=	\sum_{n=-\infty}^\infty \hat{\GGG}_m(2\pi n) e^{-2\pi i n u}
\end{equation*}
we get
\begin{equation}\label{eq:6}
\sum_{\nu=-\infty}^\infty \GGG_m(\nu+u)
=  \sum_{k=1}^{m} \beta_{m-1,k-1}\eps^{k-1} \sum_{n=-\infty}^\infty |n|^{k-1} e^{-|n|\eps} e^{-2\pi i n u}.
\end{equation}
For the evaluation of the inner sum in the right-hand side of \eqref{eq:6}
we use Lemma~\ref{lem:4} with $t=a^{-1}z$ and with $t=a^{-1}\bar{z}$ to get
\begin{multline}\label{eq:8b}
\sum_{n=-\infty}^\infty |n|^{k-1} e^{-|n|\eps} e^{-2\pi i n u}\\
=\sum_{\ell=1}^k A_{k-1,\ell-1} \left[(1-a^{-1}z)^{-\ell}+(1-a^{-1}\bar{z})^{-\ell}-\delta_{k,1}\right]\\
=-\delta_{k,1}+\sum_{\ell=1}^k A_{k-1,\ell-1} 2a^\ell Re\left\{(a-z)^{-\ell}\right\}.
\end{multline}
Substituting \eqref{eq:8b} in \eqref{eq:6} we arrive at \eqref{eq:9}.
\end{proof}

\begin{proof}
[Proof of \thmref{thm:main-3}]
Due to the rotational invariance we may assume that the vector $\eta=(1,0)$ in \eqref{eq:5.5}.
For any $x=(x_1,x_2)\in\SSS^1$ we apply \thmref{thm:1} with $z=x_1+ix_2=e^{-2\pi i u}$, $|u|\le 1/2$.
Thus $\rho(x,\eta)=2\pi |u|$ and $a-z=|x-a\eta|e^{i\varphi}$, where $\cos\varphi=-\frac{(x-a\eta)\cdot \eta}{|x-a\eta|}$.
Using the Maxwell formula \eqref{Maxwell-4} we get
\begin{multline}\label{eq:5.8}
Re\left\{(a-z)^{-\ell}\right\}
=Re\left\{(a-\bar{z})^{-\ell}\right\}\\
= (-1)^{\ell} T_{\ell}\left(\frac{(x-a\eta)\cdot \eta}{|x-a\eta|}\right)\frac{1}{|x-a\eta|^\ell}
=\frac{1}{(\ell-1)!}\left(\eta\cdot\nabla\right)^\ell \ln \frac{1}{|x-a\eta|}.
\end{multline}
Now, combining \eqref{eq:5.8} with \eqref{eq:9} we get
\begin{multline}\label{eq:5.9}
\sum_{\nu=-\infty}^\infty 2\pi \eps^{-1}(1+\eps^{-2}(\rho(x,\eta)+2\pi \nu)^2)^{-m}\\
= -\beta_{m-1,0} +2\sum_{\ell=1}^{m} \sum_{k=\ell}^{m} \beta_{m-1,k-1}A_{k-1,\ell-1} \eps^{k-1}a^\ell
\frac{1}{(\ell-1)!}\left(\eta\cdot\nabla\right)^\ell \ln \frac{1}{|x-a\eta|}
\end{multline}
whenever $u\ge 0$.
Identity \eqref{eq:5.9} is also valid for $u< 0$
because the left-hand side of \eqref{eq:9} is an even function of $u$.
Now, multiplying both sides of \eqref{eq:5.9} by $\frac{2^{2m-3}}{\pi m}$ we obtain \eqref{eq:5.5}.

Inequalities \eqref{eq:5.7} follow readily by \eqref{eq:5.5}.
From \eqref{eq:5.5} and \eqref{eq:5} we get
\begin{equation*}
\int_{\SSS^1}\sG_{\eps,m}(a\eta,x)\,d\sigma(x)
=\frac{2^{2m-2}}{m}\int_{\RR} (1+u^2)^{-m}du=\frac{2^{2m-2}}{m}\hat{\GGG}_m(0)=\frac{\pi(2m-2)!}{(m-1)!m!},
\end{equation*}
which confirms \eqref{eq:5.6}.
\end{proof}

\begin{rem}
Some similarities and differences between the functions
$\sF_{\eps,m}(a\eta,x)$ defined in \eqref{eq:2.2} and $\sG_{\eps,m}(a\eta,x)$ defined by \eqref{eq:5.1} are:
\begin{itemize}
	\item $\sF_{\eps,m}$ is defined for every $d\ge 2$, while $\sG_{\eps,m}$ is defined only for $d=2$.

	\item In \eqref{eq:2.2} $a=1+\eps$, while $a=e^\eps$ in \eqref{eq:5.1}.

	\item $q_\ell$ and $Q_\ell$ are polynomials of the same degree and $q_\ell(\delta)-Q_\ell(2\eps)= O(\eps)$, $\ell=1,\dots,m$.

	\item The polynomials $Q_\ell$ are given explicitly, while the $q_\ell$'s are only known recursively.
\end{itemize}
\end{rem}

\section{Localization on $\SS$ of harmonic functions on $\RR^d\setminus \overline{B^d}$}\label{sec:S4}

Having solved Problem 1 one can easily solved the analogous problem for localization on $\SS$
of linear combinations of shifts of the Newtonian kernel with poles inside the unit ball.
The answer is given by the simple

\begin{prop}
For $d>2$, $\eta\in\SS$ and $a_\nu>1$ the harmonic functions on $\RR^d\setminus\cup_{\nu=1}^m\{a_\nu\eta\}$
\begin{equation*}
b_0 + \sum_{\nu=1}^{m} \frac{b_\nu}{|x-a_\nu\eta|^{d-2}}
\end{equation*}
and the harmonic functions on $\RR^d\setminus\cup_{\nu=1}^m\{a_\nu^{-1}\eta\}$
\begin{equation*}
b_0 + \sum_{\nu=1}^{m} \frac{b_\nu|a_\nu|^{2-d}}{|x-a_\nu^{-1}\eta|^{d-2}}
\end{equation*}
coincide on $\SS$.

For $d=2$, $\eta\in\SSS^1$ and $a_\nu>1$ the harmonic functions on $\RR^2\setminus\cup_{\nu=1}^m\{a_\nu\eta\}$
\begin{equation*}
b_0 + \sum_{\nu=1}^{m} b_\nu\ln \frac{1}{|x-a_\nu\eta|}
\end{equation*}
and the harmonic functions on $\RR^2\setminus\cup_{\nu=1}^m\{a_\nu^{-1}\eta\}$
\begin{equation*}
b_0 + \sum_{\nu=1}^{m}b_\nu \ln\frac{1}{|a_\nu|} + \sum_{\nu=1}^{m} b_\nu\ln \frac{1}{|x-a_\nu^{-1}\eta|}
\end{equation*}
coincide on $\SSS^1$.
\end{prop}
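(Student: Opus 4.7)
The plan is to reduce the entire proposition to the single algebraic identity
\begin{equation*}
|x - a^{-1}\eta| = a^{-1}|x - a\eta|, \qquad x\in\SS,\; \eta\in\SS,\; a>0,
\end{equation*}
which is the Kelvin-type reflection across the unit sphere. I would derive this by direct expansion via \eqref{norm-rep}: for $x\in\SS$,
\begin{equation*}
|x - a^{-1}\eta|^2 = 1 + a^{-2} - 2a^{-1}(x\cdot\eta) = a^{-2}\bigl(a^2 + 1 - 2a(x\cdot\eta)\bigr) = a^{-2}|x - a\eta|^2,
\end{equation*}
and then taking square roots.

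With this identity in hand, the rest is term-by-term substitution. In the case $d>2$, raising to the power $d-2$ yields
\begin{equation*}
\frac{1}{|x-a_\nu\eta|^{d-2}} = \frac{|a_\nu|^{2-d}}{|x-a_\nu^{-1}\eta|^{d-2}}\qquad \text{for } x\in\SS,
\end{equation*}
so multiplying by $b_\nu$ and summing over $\nu=1,\dots,m$ (the constant $b_0$ is unchanged) gives the equality of the two listed harmonic functions on $\SS$. In the case $d=2$, taking logarithms of the identity yields
\begin{equation*}
\ln\frac{1}{|x-a_\nu\eta|} = \ln\frac{1}{|a_\nu|} + \ln\frac{1}{|x-a_\nu^{-1}\eta|}\qquad \text{for } x\in\SSS^1,
\end{equation*}
so that after multiplying by $b_\nu$ and summing, the extra logarithmic constants $\sum_\nu b_\nu \ln(1/|a_\nu|)$ appear exactly as in the statement, absorbing the difference between the two expressions.

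There is essentially no obstacle: the computation is a one-line consequence of $|x|=1$. The only points to mention carefully are that the poles $\{a_\nu\eta\}$ and $\{a_\nu^{-1}\eta\}$ are disjoint from $\SS$ (since $a_\nu>1$), so both functions are genuine harmonic functions on their respective domains and the pointwise identity on $\SS$ is meaningful; and that the harmonicity on $\RR^d\setminus\cup_\nu\{a_\nu^{\pm 1}\eta\}$ of each sum is immediate from the harmonicity of each Newtonian kernel summand in the complement of its pole.
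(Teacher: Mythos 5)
Your proposal is correct and follows the same route as the paper: the paper's proof rests precisely on the symmetry identity $a|x-a^{-1}\eta|=|x-a\eta|$ for $x,\eta\in\SS$, $a>0$, which you derive explicitly from \eqref{norm-rep} and then apply term by term (power $d-2$ for $d>2$, logarithms for $d=2$). Your additional remarks on harmonicity and the poles avoiding the sphere are accurate but not needed beyond what the paper states.
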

The proof follows immediately by the symmetry lemma:
\begin{equation*}
a|x-a^{-1}\eta|=|x-a\eta|,\quad x,\eta\in\SS,~a>0.
\end{equation*}

\section{Localized kernels on $\RR^{d-1}$ in terms of Newtonian kernels}\label{sec:Rd}

In this section we construct highly localised kernels on the subspace
\begin{equation}\label{def-Rd-1}
\RR^{d-1}:=\{x\in\RR^d: x=(x_1, \dots, x_{d-1},0)\}
\quad\hbox{of $\;\RR^d$.}
\end{equation}
In this case the problem is less involved compared to the case on $\SS$ and the solution is simpler.


\begin{thm}\label{thm:main-2}
Let $m\in\NN$, $d\ge 2$, $\eps>0$, and $\eta=(0,\dots,0,-1)$.
Denote
\begin{equation}\label{def-F-eps}
\cF_{\eps,m}^*(x) := \frac{2^{2m-2}(d/2)_{m-1}}{m!}\frac{\eps^{2m-1}}{|x-\eps\eta|^{2m+d-2}},
\quad x\in\RR^{d-1}.
\end{equation}
The function $\cF_{\eps,m}^*$ has the following properties:
\begin{equation}\label{main-22}
0< \cF_{\eps,m}^*(x) \le \frac{c_1\eps^{-d+1}}{(1+\eps^{-1}|x|)^{2m+d-2}},
\quad\forall x\in \RR^{d-1},
\end{equation}
and
\begin{equation}\label{main-23}
\int_{\RR^{d-1}}\cF_{\eps,m}^*(x)\,dx_1\dots dx_{d-1} \ge c_2>0,
\end{equation}
where $c_1, c_2>0$ are constants depending only on $m$ and $d$.
Furthermore, $\cF_{\eps,m}^*$ is the restriction to $\RR^{d-1}$
of the harmonic function $\sF_{\eps,m}^*$, defined on $\RR^{d}\setminus \{\eps\eta\}$,
\begin{equation}\label{main-24}
\sF_{\eps,m}^*(x)
=\sum_{\ell=1}^m \frac{(-1)^\ell 2^{\ell-1}(2m-\ell-1)!}{(\ell-1)!m!(m-\ell)!(d-2)}\eps^{\ell-1} \partial_d^\ell \frac{1}{|x-\eps\eta|^{d-2}}
\quad\mbox{if}~d\ge3,
\end{equation}
\begin{equation}\label{main-25}
\sF_{\eps,m}^*(x)
= \sum_{\ell=1}^m \frac{(-1)^{\ell} 2^{\ell-1}(2m-\ell-1)!}{(\ell-1)!m!(m-\ell)!}\eps^{\ell-1} \partial_d^\ell \ln \frac{1}{|x-\eps\eta|}
\quad\mbox{if}~d=2,
\end{equation}
where $\partial_d$ stands for the partial derivative with respect to $x_d$.
\end{thm}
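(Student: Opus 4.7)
The plan is to follow the template of the proof of Theorem~\ref{thm:main}, but the half-space setting is considerably simpler because $|x-\eps\eta|^2 = |x|^2 + \eps^2$ when $x \in \RR^{d-1}$ (with $\eta = -e_d$). This gives \eqref{main-22} immediately from $|x|^2 + \eps^2 \ge \tfrac{1}{2}(|x|+\eps)^2$, the flat analog of \eqref{simple-ineq}. For \eqref{main-23} I pass to polar coordinates on $\RR^{d-1}$ and rescale $r = \eps s$, reducing the integral to $\omega_{d-2}$ times the finite positive constant $\int_0^\infty s^{d-2}(1+s^2)^{-(2m+d-2)/2}\,ds$, depending only on $m$ and $d$.

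The main task is showing that the harmonic function $\sF^*_{\eps,m}$ in \eqref{main-24} (resp.\ \eqref{main-25} for $d=2$) restricts on $\RR^{d-1}$ to $\cF^*_{\eps,m}$. Since $\eta = -e_d$, we have $\partial_d = -\eta\cdot\nabla$, so by Maxwell's formula \eqref{Maxwell-3},
$$\partial_d^\ell |x-\eps\eta|^{2-d} = \ell!\,C_\ell^{(d/2-1)}\!\!\left(\frac{(x-\eps\eta)\cdot\eta}{|x-\eps\eta|}\right)|x-\eps\eta|^{-d+2-\ell}.$$
Restricting to $x_d = 0$ yields $(x-\eps\eta)\cdot\eta = -\eps$ and $|x-\eps\eta| = R := \sqrt{|x|^2+\eps^2}$; the parity $C_\ell^{(\mu)}(-t) = (-1)^\ell C_\ell^{(\mu)}(t)$ absorbs the $(-1)^\ell$ in \eqref{main-24}. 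Setting $t := \eps/R$ and clearing common factors of $R$, the identity to be proven collapses to the polynomial identity
$$\sum_{\ell=1}^m \frac{2^{\ell-1}\,\ell\,(2m-\ell-1)!}{(m-\ell)!\,(d-2)}\,t^\ell\,C_\ell^{(d/2-1)}(t) = 2^{2m-2}(d/2)_{m-1}\,t^{2m}.$$

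Proving this polynomial identity is the main obstacle. I substitute the explicit expansion \eqref{eq:2.6} for $C_\ell^{(d/2-1)}$ (the factor $(d/2-1)/(d-2) = \tfrac12$ removes the apparent singularity at $d = 2$) and interchange the order of summation to collect by powers $t^{2k}$. The identity then reduces to verifying $S_k = \delta_{k,m}$ for $1 \le k \le m$, where
$$S_k := \sum_{\ell=k}^{\min(2k,m)}\frac{(-1)^{\ell-k}\,\ell\,(2m-\ell-1)!}{(m-\ell)!(\ell-k)!(2k-\ell)!}.$$
The case $k = m$ is a one-term computation. For $k < m$, after the substitution $j = \ell - k$ and the rewriting $\binom{2m-k-j-1}{m-k-j} = (-1)^{m-k-j}\binom{-m}{m-k-j}$, splitting $(k+j)\binom{k}{j} = k\binom{k}{j} + k\binom{k-1}{j-1}$ and applying Vandermonde's convolution to each piece produces $k\binom{-(m-k)}{m-k}$ and $k\binom{-(m-k)-1}{m-k-1}$, which are equal up to sign by the standard evaluations $\binom{-N}{N} = (-1)^N\binom{2N-1}{N-1}$ and $\binom{-N-1}{N-1} = (-1)^{N-1}\binom{2N-1}{N-1}$, and therefore cancel. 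The case $d=2$ is identical with \eqref{Maxwell-4} and \eqref{eq:2.7} replacing \eqref{Maxwell-3} and \eqref{eq:2.6}; the reduction yields the Chebyshev analog $\sum_{\ell=1}^m \frac{2^{\ell-1}(2m-\ell-1)!}{(m-\ell)!}\,t^\ell T_\ell(t) = 2^{2m-2}(m-1)!\,t^{2m}$, which collapses to the very same $S_k = \delta_{k,m}$.
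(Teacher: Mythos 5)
Your proposal is correct, but it takes a genuinely different route from the paper. The paper deduces Theorem~\ref{thm:main-2} from Theorem~\ref{thm:main} by a blow-up argument: it rescales the kernel $\cF_{\eps/R,m}$ to a sphere $\SSS(\bar x,R)$ with pole at $\eps\eta$, writes the resulting representations \eqref{eq:2.3c}--\eqref{eq:2.3d} using the coefficients $q_\ell=\sum_k\alpha_{\ell,k}\delta^k$, and lets $R\to\infty$, so that only the leading coefficients $\alpha_{\ell,0}=\alpha_\ell(m)=\frac{\ell(2m-\ell-1)!}{m!(m-\ell)!}$ of Lemma~\ref{lem:1} survive and the sphere flattens to $\RR^{d-1}$; the decay and integral bounds are then read off from \eqref{def-F-eps} exactly as you do. You instead verify \eqref{main-24}--\eqref{main-25} directly: Maxwell's formulas \eqref{Maxwell-3}--\eqref{Maxwell-4} plus the parity of $C_\ell^{(\mu)}$ and $T_\ell$ reduce the claim on $\{x_d=0\}$ to the polynomial identity in $t=\eps/|x-\eps\eta|$ that you state, and the expansions \eqref{eq:2.6}--\eqref{eq:2.7} turn it into $S_k=\delta_{k,m}$, which your split of $(k+j)\binom{k}{j}$ followed by two Vandermonde convolutions settles; I checked the reduction, the sign bookkeeping, and the two negative-upper-index evaluations, and they are all correct (the $d=2$ case indeed yields the same $S_k$ since $(1)_{k-1}=(k-1)!$). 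In effect your combinatorial identity is an independent re-proof of the $\delta=0$ system \eqref{eq:2.17} for the ballot numbers, so your argument is self-contained and avoids both Theorem~\ref{thm:main} and the limiting process, at the price of a new hypergeometric computation; the paper's route reuses the machinery already built and explains \emph{why} the flat-case coefficients are exactly the $\alpha_\ell(m)$. Two trivial points: the surface measure of the unit sphere of $\RR^{d-1}$ is $\omega_{d-1}$ in the paper's convention, not $\omega_{d-2}$, and you should say explicitly (one line) that $\sF^*_{\eps,m}$ is harmonic on $\RR^d\setminus\{\eps\eta\}$ as a finite linear combination of derivatives of the Newtonian kernel.
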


From \thmref{thm:main-2} we immediately get

\begin{cor}\label{cor:main-2}
Under the hypotheses of Theorem~\ref{thm:main-2} define
\begin{equation}\label{def-F-2}
F_{\eps,m}^*(x):=\cF_{\eps,m}^*(x)\Big(\int_{\RR^{d-1}}\cF_{\eps,m}^*(y)dy\Big)^{-1}, \quad x\in\RR^{d-1}.
\end{equation}
Then $F_{\eps,m}^*(x)$ is a summability kernel with decay just as in $(\ref{main-22})$
that can be represented as a linear combination of
$\partial_d^\ell |x-\eps\eta|^{2-d}$ if $d >2$ or
$\partial_d^\ell \ln 1/|x-\eps\eta|$ if $d >2$ for $\ell=1,\dots, m$.
\end{cor}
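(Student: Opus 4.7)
The corollary asserts three things about the normalized kernel $F_{\eps,m}^*$: positivity, that it is genuinely a summability kernel (so in particular $\int F_{\eps,m}^* = 1$), the bound
$F_{\eps,m}^*(x)\le c\eps^{-d+1}(1+\eps^{-1}|x|)^{-2m-d+2}$, and the representation as a linear combination of $\partial_d^\ell|x-\eps\eta|^{2-d}$ (respectively $\partial_d^\ell\ln 1/|x-\eps\eta|$) for $\ell=1,\dots,m$. Each of these is essentially a formal consequence of Theorem~\ref{thm:main-2}; the only real content is verifying that the normalizing factor $I_{\eps}:=\int_{\RR^{d-1}}\cF_{\eps,m}^*(y)\,dy$ is a finite, strictly positive constant that does not depend on $\eps$, so that dividing by it is an honest rescaling.

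The plan is to begin by checking that $I_{\eps}$ is independent of $\eps$. Since $\eta=(0,\dots,0,-1)$ and $x\in\RR^{d-1}$, the formula \eqref{def-F-eps} reduces to $\cF_{\eps,m}^*(x)=C_{m,d}\,\eps^{2m-1}(|x|^2+\eps^2)^{-(m+d/2-1)}$, and the substitution $x=\eps z$ turns the integral into $C_{m,d}\int_{\RR^{d-1}}(1+|z|^2)^{-(m+d/2-1)}\,dz$. The last integral converges because $2(m+d/2-1)>d-1$ for $m\ge 1$, and it is strictly positive, independent of $\eps$. Alternatively one may simply invoke \eqref{main-23} together with a matching upper bound derived by integrating \eqref{main-22}; both arguments give $I_{\eps}=:I_{m,d}$ with $0<c_2\le I_{m,d}\le c_1'$, where $c_1',c_2$ depend only on $m,d$.

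With $I_{m,d}$ in hand, the remaining claims are immediate. Positivity of $F_{\eps,m}^*$ follows from positivity of $\cF_{\eps,m}^*$ in \eqref{main-22} and positivity of $I_{m,d}$. The identity $\int_{\RR^{d-1}}F_{\eps,m}^*(y)\,dy=1$ holds by construction via \eqref{def-F-2}. The decay estimate is obtained by dividing \eqref{main-22} by $I_{m,d}$, giving exactly the bound $F_{\eps,m}^*(x)\le (c_1/c_2)\eps^{-d+1}(1+\eps^{-1}|x|)^{-2m-d+2}$, which is of the form asserted (same shape as \eqref{main-22}, with a new constant depending only on $m$ and $d$). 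Finally, Theorem~\ref{thm:main-2} tells us that $\cF_{\eps,m}^*$ is the restriction of the harmonic function $\sF_{\eps,m}^*$ to $\RR^{d-1}$, and \eqref{main-24}--\eqref{main-25} exhibit $\sF_{\eps,m}^*$ as an explicit linear combination of $\partial_d^\ell|x-\eps\eta|^{2-d}$ or $\partial_d^\ell\ln 1/|x-\eps\eta|$, $\ell=1,\dots,m$. Dividing each coefficient by $I_{m,d}$ yields the desired representation of $F_{\eps,m}^*$.

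There is no real obstacle here; the one place where one must be careful is to argue that the normalizing factor $I_{\eps}$ is bounded away from $0$ and $\infty$ uniformly in $\eps$, which I would do by the scaling argument above (a one-line calculation) rather than relying only on the lower bound \eqref{main-23}. Everything else is a mechanical transfer of the properties of $\cF_{\eps,m}^*$ and $\sF_{\eps,m}^*$ already established in Theorem~\ref{thm:main-2} through a division by the positive constant $I_{m,d}$.
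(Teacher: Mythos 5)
Your proposal is correct and matches the paper's (implicit) argument: the paper states the corollary as an immediate consequence of Theorem~\ref{thm:main-2}, and your mechanical transfer—checking that the normalizing integral is finite and bounded below (your scaling computation even shows it is independent of $\eps$), then dividing the positivity, decay bound \eqref{main-22} via \eqref{main-23}, and the representation \eqref{main-24}--\eqref{main-25} by that constant—is exactly that intended route.
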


\begin{proof}[Proof of \thmref{thm:main-2}]
We shall derive this result from Theorem~\ref{thm:main} by a limiting process.

Our first step is to obtain a version of \thmref{thm:main} for an arbitrary sphere of radius $R$ in $\RR^d$.
Let $m\in\NN$, $d\ge 2$, $\eps>0$, $\eta\in\SS$, $\bar{x}\in\RR^d$, and $R>\eps$.
Set $\bar{y}=\bar{x}+(R+\eps)\eta$.
Denote by $\SSS(\bar{x}, R)$ the sphere in $\RR^d$ centered at $\bar{x}$ of radius $R$,
i.e. $\SSS(\bar{x}, R):= \{\bar{x}\}+R\SS$.
Scaling by a factor of $1/R$ the sphere $\SSS(\bar{x}, R)$ and the pole location $\bar{y}$
we arrive at the sphere $\SSS(\bar{x}/R, 1)$
and pole location at $\bar{y}/R=\bar{x}/R+(1+\eps/R)\eta$.
By (\ref{eq:2.3}) with $\eps/R$ and $\bar{x}/R$ in the place of $\eps$ and $\bar{x}$
we get for $x/R\in \SSS(\bar{x}/R, 1)$
\begin{equation*}
\cF_{\eps/R,m}\left(\frac{x-\bar{x}}{R}\cdot\eta\right)
= \frac{(d/2)_{m-1}}{2m!}\frac{(\eps/R)^{2m-1}(2+\eps/R)^{2m-1}}{(1+\eps/R)^{2m-2}}
\left|\frac{x}{R}-\frac{\bar{y}}{R}\right|^{-d+2-2m}.
\end{equation*}
We multiply both sides above by $R^{1-d}$ and factor $1/R$ out of the norm to obtain
\begin{equation*}
R^{1-d}\cF_{\eps/R,m}\left(\frac{x-\bar{x}}{R}\cdot\eta\right)
= \frac{(d/2)_{m-1}}{2m!}\frac{(2+\eps/R)^{2m-1}}{(1+\eps/R)^{2m-2}}\eps^{2m-1}|x-\bar{y}|^{-d+2-2m}.
\end{equation*}
Now, using \thmref{thm:main} and \eqref{eq:2.0} we obtain the follow representations
of the functions $R^{1-d}\cF_{\eps/R,m}((x-\bar{x})R^{-1}\cdot\eta)$ for $x\in \SSS(\bar{x},R)$:
In the case $d\ge 3$ we have
\begin{align}\label{eq:2.3c}
&\frac{(d/2)_{m-1}}{2m!}\frac{(2+\eps/R)^{2m-1}}{(1+\eps/R)^{2m-2}}\eps^{2m-1}|x-\bar{y}|^{-d+2-2m} \notag
\\
&=\sum_{k=0}^{m-1}\alpha_{0,k}\frac{(\eps/R)^{k}(2+\eps/R)^{k}}{(1+\eps/R)^{2k}}R^{-1}|x-\bar{y}|^{2-d}
\\
& +\sum_{\ell=1}^m \frac{(2+\eps/R)^{\ell-1}}{(1+\eps/R)^{\ell-2}}\frac{\eps^{\ell-1}}{\ell!(d-2)}
\sum_{k=0}^{m-\ell}\alpha_{\ell,k}\frac{(\eps/R)^{k}(2+\eps/R)^{k}}{(1+\eps/R)^{2k}} (\eta\cdot\nabla)^\ell |x-\bar{y}|^{2-d} \notag
\end{align}
and in the case $d=2$
\begin{multline}\label{eq:2.3d}
\frac{1}{2m}\frac{(2+\eps/R)^{2m-1}}{(1+\eps/R)^{2m-2}}\frac{\eps^{2m-1}}{|x-\bar{y}|^{2m}}
=\sum_{k=0}^{m-1}\alpha_{0,k}\frac{(\eps/R)^{k}(2+\eps/R)^{k}}{(1+\eps/R)^{2k}}R^{-1}\\
+\sum_{\ell=1}^m \frac{(2+\eps/R)^{\ell-1}}{(1+\eps/R)^{\ell-2}}\frac{\eps^{\ell-1}}{\ell!}
\sum_{k=0}^{m-\ell}\alpha_{\ell,k}\frac{(\eps/R)^{k}(2+\eps/R)^{k}}{(1+\eps/R)^{2k}} (\eta\cdot\nabla)^\ell \ln\frac{1}{|x-\bar{y}|}.
\end{multline}

We are prepared to prove identities (\ref{main-24})--(\ref{main-25}).
Let $\eps>0$ and $\eta=(0,\dots,0,-1)$.
Fix $x^\star=(x_1^\star,\dots, x_{d-1}^\star, 0)\in\RR^{d-1}$ (see (\ref{def-Rd-1}))
and let $R>\max\{|x^\star|,\eps\}$.

We choose $\bar{x}:=-R\eta$, $\bar{y}:=\eps\eta=(0,\dots,0,-\eps)$,
and consider the point $x\in \SSS(\bar{x},R)$ defined by
$$
x:= (x_1^\star,\dots, x_{d-1}^\star, x_d), \quad\hbox{where}\quad x_d:=|x^\star|^2/(R+\sqrt{R^2-|x^\star|^2}).
$$
It is easy to verify that $x-\bar{x}\in R\SS$. Then \eqref{eq:2.3c} and \eqref{eq:2.3d} hold.
Letting $R\to\infty$ in \eqref{eq:2.3c} or \eqref{eq:2.3d}, using \lemref{lem:1} and observing that $x\to x^*$
we conclude that the restriction of $\sF_{\eps,m}^*$ from \eqref{main-24}--\eqref{main-25}
coincides with $\cF_{\eps,m}^*$ from \eqref{def-F-eps} at every point $x^\star\in \RR^{d-1}$.

Inequalities \eqref{main-22}--\eqref{main-23} follow trivially from \eqref{def-F-eps}.
\end{proof}

\end{document}